\newcommand{\cdummy}{\cdot}
\newcommand{\mathd}{\mathrm{d}}
\newcommand{\tmem}[1]{{\em #1\/}}
\newcommand{\tmop}[1]{\ensuremath{\operatorname{#1}}}
\newcommand{\jacdat}{\sqrt{|G|}}
\newcommand{\half}{\frac{1}{2}}
\newcommand{\keywords}[1]{%
  \par\smallskip
  \noindent\textbf{Keywords: }#1
}
\newcommand{\MSC}[2][]{%
  \par\smallskip
  \noindent\textbf{MSC#1: }#2
}
\newtheorem{theorem}{Theorem}
\newtheorem{definition}[theorem]{Definition}
\newtheorem{remark}[theorem]{Remark}
\newtheorem{lemma}[theorem]{Lemma}
\title{ Surface Dean--Kawasaki equations}
\author[1]{John Bell}
\author[2]{Ana Djurdjevac}
\author[3]{Nicolas Perkowski}
\affil[1]{Lawrence Berkeley National Laboratory, Berkeley, California, 94720, USA}
\affil[2]{University of Oxford, Mathematical Institute, Oxford, UK, 

and Freie Universität Berlin, Arnimallee 6, 14195 Berlin, Germany}
\affil[3]{Freie Universität Berlin, Arnimallee 7, 14195 Berlin, Germany, 

and Max-Planck Institute for Mathematics in the Sciences, Leipzig, Germany}
\begin{document}
\maketitle
\begin{abstract}
We consider stochastic particle dynamics on hypersurfaces represented in Monge gauge parametrization. Starting from the underlying Langevin system, we derive the surface Dean–Kawasaki (DK) equation and formulate it in the martingale sense. The resulting SPDE explicitly reflects the geometry of the hypersurface through the induced metric and its differential operators. Our framework accommodates both pairwise interactions and environmental potentials, and we extend the analysis to evolving hypersurfaces driven by an SDE that interacts with the particles, yielding the corresponding surface DK equation for the coupled surface–particle system. We establish a weak uniqueness result in the non-interacting case, and we develop a finite-volume discretization preserving the fluctuation–dissipation relation. Numerical experiments illustrate equilibrium properties and dynamical behavior influenced by surface geometry and external potentials.

\keywords{Dean--Kawasaki equation, interacting particle systems, fluctuating membrane, surface SPDEs, FVM}

\MSC[MSC Classification]{ 60H15, 60J60, 60H35, 65C30, 60K35}



\end{abstract}

\section{Introduction}

Transport processes constrained to curved surfaces arise in a wide range of physical, biological, and chemical systems. Whenever the motion of microscopic particles: proteins, agents, etc. -- is restricted to a lower-dimensional manifold, the interplay between geometry, stochasticity, and interactions becomes an important feature of the resulting dynamics. Such scenarios appear for example in statistical physics (active or passive particles constrained to manifolds), agent based modeling in social dynamics  (target tracking system on the planet \cite{choi2023multi}), molecular dynamics \cite{ciccotti2008projection,LelievreRoussetStoltz,lindahl2008membrane}, or consensus-based optimization \cite{Fornasier2021}. 



Cell biology, in particular, offers a representative setting for such systems \cite{bressloff2013stochastic, lipowsky1995structure}.
The cell membrane is a  (thermally fluctuating) surface composed of lipids and embedded proteins, and it hosts numerous processes crucial for cellular function. A prototypical membrane model is the thermally excited Helfrich elastic membrane \cite{naji2007diffusion,reister2005lateral}.  Lateral diffusion of membrane-bound molecules is one of the primary mechanisms governing signaling, transport, and regulation at the cellular interface (see, e.g., \cite{bressloff2013stochastic}). The study of such processes has attracted sustained attention over the past decades \cite{castro2010brownian,lipowsky1995structure,saxton1997single}. While some works investigate particle diffusion on static membranes \cite{reister2005lateral}, 
others concentrate on membranes with thermal fluctuations that lead to the change of the shape of a membrane over time \cite{gov2004membrane,reister2007hybrid,  reister2010diffusing}.
Besides the interaction between particles, there is also induced interaction between the particles and the shape of the membrane.  


From a mathematical perspective, these systems provide prototypical examples where geometry and noise couple in a nontrivial way, requiring new analytical and numerical tools.
The particles are often modeled as interacting Brownian diffusions on manifolds. Brownian motion on a hypersurface can be defined using the Laplace-Beltrami generator \cite{Hsu}, but can be also represented via Langevin equations. As highlighted in \cite{naji2007diffusion}, an important advantage of the Langevin description is its flexibility to incorporate extensions that account for both particle–particle and membrane-particle interactions (for instance, active protein inclusions \cite{lin2006nonequilibrium}).

We consider the case when the surface is described in the Monge gauge,  meaning that it is represented as the graph of a given height function, as for example in \cite{DuncanElliottPavliotisStuart}. The advantage of this representation is that all differential operators can be easily represented explicitly in global coordinate space using appropriate Euclidean differential operators.

The simulations of such  particle-membrane systems are often very costly when the system becomes large, which is typically the case in the applications mentioned above. The main goal of this work is to propose a reduced model for particle systems on (evolving) curved domains with pairwise interactions and particle-membrane coupling, while still capturing finite-size effects.
A well-established framework for such reductions is the description of the dynamics of the empirical measure through  Dean–Kawasaki equation \cite{dean1996langevin,kawasaki1994stochastic}. The study of these equations, from both computational \cite{donev2010accuracy,helfmann2021interacting} and analytical \cite{cornalba2023dean, Cornalba2023Density, djurdjevac2024weak,djurdjevac2025weak, Fehrman2023, fehrman2024well,konarovskyi2020dean,Konarovskyi} perspectives, has evolved rapidly over the last decade. Recent work by Dello Schiavo \cite{DelloSchiavo2024massive} derives the Dean–Kawasaki equation for independent particles associated with a reversible Dirichlet form, using the framework of Hilbert tangent bundles. This covers, in particular, general Riemannian manifolds.  Nevertheless, to the best of our knowledge, no counterpart of the Dean–Kawasaki equation has been derived for interacting particle systems on curved, evolving domains, and there has been no numerical framework for this geometric setting. Closing this gap  is the main contribution of the present work.

Starting from the Langevin dynamics describing both pairwise particle interactions and particle-membrane coupling, we apply Itô’s formula to derive the evolution equation for the empirical distribution. As in the flat case, a key difficulty in this derivation is that the resulting noise term is not closed, making a direct formulation in terms of the empirical density problematic. For this reason, a more appropriate viewpoint is provided by the martingale formulation, which allows us to characterize the quadratic variation of the noise. The martingale formulation suggests a formal nonlinear transport-type equation for the (formal) density of the empirical distribution, which serves as basis for numerical discretizations. A distinct feature arising on hypersurfaces concerns the choice of the correct noise representation: in this geometric setting, one may consider the (formal) density of the empirical distribution either with respect to Lebesgue measure on the coordinate space or with respect to the intrinsic surface area measure. We analyze both possibilities, which lead to two different formulations of the resulting equation. To justify the chosen noise structure, we further establish that the fluctuation-dissipation relation is satisfied. In addition, we prove weak uniqueness for the surface Dean–Kawasaki equation describing non-interacting particles on a fluctuating hypersurface.


For the numerical approximation, we exploit the conservative structure of the equation and employ a finite-volume discretization in space combined with an Euler--Maruyama scheme in time. A major challenge arises from the fact that the matrices appearing in the divergence formulation are generally full, leading to coupling through off-diagonal terms. To address this issue, we adopt one-sided approximations of the surface gradient operators and construct a discrete metric divergence operator that is adjoint to the discrete gradient with respect to the surface area measure. We show that with this type of the discretization, the fluctuation dissipation relation is preserved. We present two main classes of numerical experiments. The first focuses on equilibrium statistics of the linearized surface Dean--Kawasaki equation, while the second examines how the interplay between particle dynamics and geometry influences the spreading of particles. The results indicate that the surface geometry plays a central role in governing how particles move and how they ultimately spread across the domain. Additional numerical experiments illustrate how an external potential impacts the particle dynamics.

The paper is organized as follows. Section \ref{sec:preliminaries} reviews the geometric preliminaries, introducing the Monge gauge parametrization and the differential operators required for the formulation of the surface equation. Section \ref{sec:derivation} presents the derivation of the surface Dean–Kawasaki equation starting from the underlying Langevin dynamics. To highlight the main ideas, we first examine the case of independent Brownian particles on a hypersurface and then derive the full surface Dean–Kawasaki equation for interacting particles coupled to an evolving hypersurface whose motion is governed by a stochastic differential equation. In Section \ref{sec:discretization}, we present a finite volume discretization of the two-dimensional surface Dean--Kawasaki equation, including the effect of a possible external potential. Section \ref{sec:experiments} provides computational examples that validate the proposed numerical scheme in the absence of an external potential and demonstrate its performance when an external potential is present.

\section{Preliminaries}\label{sec:preliminaries}

\subsection{Geometric setting }

We will consider surface that can be presented in Monge gauge (graph) parametrization. We consider the Monge-gauge representation mainly for concreteness, as it allows an explicit local-chart formulation of the Langevin dynamics and the associated Dean–Kawasaki equation, thereby making both the derivation and numerical implementation more convenient. An extension of this framework to general Riemannian manifolds is a natural next step and will be considered in future work. Let\footnote{The analysis would extend without difficulty to $\mathcal{D}=\mathbb R^d$ under suitable assumptions on $H$, or to domains $\mathcal{D} \subset \mathbb{R}^d$ with (piecewise) smooth boundary under homogeneous Neumann boundary conditions.}  $\mathcal{D}=\prod_{i=1}^d(\mathbb R/L_i\mathbb Z)$, for $L_1,\dots, L_d > 0$. For a $C^3$ height (Monge gauge) function $H: \mathcal{D}  \to \mathbb{R}$, the surface $\Gamma$ is then parametrized over $\mathcal{D}$ in the following way
\begin{equation*}
    \Gamma = \{ (x, H(x)) : x \in \mathcal{D}\}.
\end{equation*}
Hence, $\Gamma$ is the graph of the function $H$ and is parametrized by the Monge gauge map $\Phi:\mathcal{D} \to \mathbb{R}^{d+1}$ with  $\Phi(x):=(x,H(x))$, $x\in \mathcal{D}$, and the tangent basis vectors are given by
\begin{equation*}
    \tau_i = \partial_i \Phi = e_i + (\partial_i H) e_{d + 1}, \qquad i = 1,
   \ldots, d,
\end{equation*}
where $e_i$ denotes the $i$-th canonical basis vector of $\mathbb{R}^d$. Then the induced metric is given by
\begin{equation*}
     G (x)  = [\tau_i \cdot \tau_j]_{i,j=1}^d=D\Phi(x)^T D\Phi(x)=
I_d + \nabla H (x) \otimes \nabla H (x) \in
   \mathbb{R}^{d \times d},  \quad x \in \mathcal{D} \subset \mathbb{R}^d, 
\end{equation*}   
where $D \Phi$ denotes the Jacobian matrix of the parametrization $\Phi$ and  $\operatorname{I}_d$ is the identity matrix in $\mathbb{R}^{d\times d}$. 
We denote the determinant of the matrix $G$ by 
\begin{equation*}
    | G | (x) := \det (G(x))=
    1 + |\nabla H(x)|^2 ,
\end{equation*}
where $| \cdot |$ denotes the Euclidean distance. 
It follows directly that for a unit vector $e \in \mathbb{R}^d$ 
\[
1 \leq e \cdot G(x)e \leq |G|(x) \quad \forall x \in \mathcal{D}, 
\]
hence $G$ is uniformly elliptic and $C^2$. The inverse matrix $G^{-1}$ has components that we denote by $G^{ij}$ and is
given by
\[ G^{- 1} = \operatorname{I}_d - \frac{\nabla H \otimes \nabla H}{1 + | \nabla H |^2}.  \]
 
The unit outward normal is given by
\begin{equation*}
    n = \frac{1}{\sqrt{ | G |}} 
\begin{pmatrix} 
     - \nabla H\\
     1
\end{pmatrix}.
\end{equation*}
The surface area element is defined as 
\begin{equation}\label{def:nu}
    \nu(dx):=\sqrt{|G|(x)} \mathd x 
\end{equation}
and   for a scalar field $\tilde{f} : \Gamma \rightarrow \mathbb{R}$, after
using the change of the coordinates $y = \Phi (x),$ the surface measure $S$ on $\Gamma$ is given by
\begin{equation}\label{partial_int}
  \int_{\Gamma} \tilde{f} (y) {\mathd S} (y) = \int_\mathcal{D} \tilde{f} (\Phi (x)) \sqrt{ | G | (x)}
  {\mathd x}.
\end{equation}

Next, we introduce differential operators. To define the tangential gradient, we introduce the projection onto the tangent space, denoted by
$\Pi:\mathbb{R}^{d+1} \to \mathbb{R}^d$, which is defined
as
\[ \Pi := \operatorname{I}_{d + 1} - n \otimes n .\]
For the smooth function $\tilde{f}:\Gamma \to \mathbb{R}$ defined on the surface, the tangential gradient is defined as
\begin{equation}\label{eq:tangential-gradient}
    \nabla_\Gamma \tilde{f}(y) := \Pi(y) \nabla \tilde{f}(y), \quad y \in \Gamma,
\end{equation}
where the ambient gradient is applied to any extension of the function $\tilde{f}$
to the neighborhood of the surface, but since it can be shown that the value does not depend on this extension we use the same notation $\tilde{f}$. In the Monge gauge representation of $\Gamma$, every function $\tilde{f}:\Gamma \to \mathbb{R}$  on the surface can be identified with a function on the coordinate domain $\mathcal{D}$:
\begin{equation*}
    {f}:\mathcal{D} \to \mathbb{R}, \quad {f}(x) := \tilde{f}(\Phi(x)) . 
\end{equation*}
Using the chain rule, we can express the tangent vector $\nabla_\Gamma$ in local coordinates in the following way:
\begin{equation*}
    \nabla_\Gamma \tilde{f}(\Phi(x)) = D\Phi(x) G^{-1}(x) \nabla{f}(x)= \begin{pmatrix}
        G^{-1} \nabla {f}(x) \\
        \nabla H (x)   \cdot G^{-1}(x) \nabla {f} (x)
         \end{pmatrix}, \, \qquad x \in \mathcal{D} .
\end{equation*}
Since we will consider everything in the coordinate domain, we define the metric gradient of the function $f:\mathcal{D}\to \mathbb{R}$ as the $d$-dimensional upper block of the tangential gradient:
\begin{equation}\label{nablaG}
\nabla_G f := G^{-1}\nabla f .
\end{equation}
Furthermore, we define the metric divergence, that is the adjoint of the metric gradient, in the following way
\begin{equation*}
    \int_\mathcal{D} (\text{div}_G\cdot w) \varphi \sqrt{ | G |} {\mathd x} := -  \int_\mathcal{D} (w,\nabla_G \varphi)_G \sqrt{ | G |} {\mathd x} \,= - \int_\mathcal{D} w
   \cdot G \nabla_G \varphi \sqrt{ | G |} {\mathd x}, \quad \varphi \in C^{\infty}_c
   (\mathcal{D}),
\end{equation*}
where
\[
    (a,b)_G := a \cdot Gb.
\]
Using the definition of the tangential  gradient and chain rule, the metric divergence can be written using the standard Euclidean divergence as
\begin{equation}\label{def:divG}
     \text{div}_G f = \frac{1}{\sqrt{| G |}} \nabla \cdummy \left(
   \sqrt{| G |} f \right) .
   \end{equation}
We define the Laplace-Beltrami operator in the natural way
\begin{equation}\label{def:LB}
    \Delta_G f:= \text{div}_G \nabla_G f= \frac{1}{\sqrt{| G |}} \nabla
   \cdummy \left( \sqrt{| G |} G^{- 1} \nabla f\right).
\end{equation}
Note that $\Delta_G$ is symmetric in $L^2 \left( \nu \right)$.

\subsection{Brownian motion on the hypersurface}

We recall the definition of a Brownian motion on a hypersurface \cite{DuncanElliottPavliotisStuart}, which is a special case of a more general definition given in \cite[Proposition 3.2.1]{Hsu}.



For convenience,  from now on we periodically extend functions on $\mathcal{D}$ to $\mathbb R^d$. For example, this allows us to directly describe the Brownian motion on the hypersurface by an SDE, instead of having to consider  solutions modulo $\prod_{i=1}^d L_i \mathbb Z$.

\begin{definition}
    Let $(\Omega, \mathcal{F},\mathbb{P})$ be a complete probability space endowed with a right-continuous filtration $(\mathcal{F}_t)_{t \geq 0}$. An $\mathbb{R}^d$-valued process $X$ defined on $\Omega \times [0,T]$ is called a Brownian motion on $\Gamma$ started at $X_0=x \in \mathbb{R}^d$ if $X$ is adapted, almost surely continuous, and if for every $f \in C^2(\mathbb R^d)$,
    \[
    f(X_t) - f(x) -\int_0^t \Delta_G f(X_s)\mathd s,\qquad t\ge 0,
    \]
    is a local martingale starting from $0$, where $\Delta_G$ is the Laplace-Beltrami operator in local coordinates on $\mathbb{R}^d$, given by \eqref{def:LB}.
\end{definition}

The process $X$ has the following representation through an SDE:

\begin{equation}\label{eq:BMonSurface}
    \mathd X_t = b(X_t) \mathd t + \sqrt{2}
   G^{- 1 / 2} (X_t) \mathd B_t,
   \end{equation}
where 
\begin{equation}\label{b}
    b(X_t):=\frac{1}{\sqrt{| G |}} \nabla \cdummy \left(
   \sqrt{| G |} G^{- 1} \right)
\end{equation}
and $B$ is a $d$-dimensional standard Brownian motion.

Note that the coefficients of the SDE are Lipschitz continuous, because $H 
\in C^3$ is periodic and thus its derivatives are bounded. For $H \in C^2$ we would get a singularity in the drift coefficient, and it is well understood that the noise would regularize this singularity \cite{Veretennikov1981}; for simplicity we stick with the Lipschitz setting.


\begin{remark} \label{rem:StrBM}
    Note that there is another way of defining the Brownian motion on $\Gamma$ using the projection on the tangent space and the ambient Brownian motion $B^{d+1}$ from $\mathbb{R}^{d+1}$ \cite[Example 8.4]{EVanden-Eijnden}. More precisely, the process defined as $\mathd Y_t := \Pi \circ \mathd B^{d+1}_t$, where $\circ$ denotes the Stratonovich integral, and whose infinitesimal generator is given by the Laplace--Beltrami operator.
\end{remark}

\section{Derivation of surface Dean--Kawasaki equation}\label{sec:derivation}

\subsection{Surface Dean--Kawasaki equation on a fixed surface: non-interacting, potential-free case}

Let now $X^1, \ldots, X^N$ be i.i.d. copies of the Brownian motion $X$ given by \eqref{eq:BMonSurface}, potentially with different
initial conditions. Furthermore, let
\begin{equation}
        \mu_t = \frac{1}{N} \sum_{i = 1}^N \delta_{X^i_t}
\end{equation}
be the empirical distribution at time $t$. Then It{\^o}'s formula shows that
for $f \in C^2(\mathcal{D})$  the following process is a continuous martingale:
\begin{equation}\label{eq:sDK-martingale-problem}
    M^{f}_t = \mu_t (f) - \mu_0(f) - \int_0^t \mu_s (\Delta_G f) \mathd s,
\end{equation}
with quadratic variation
\begin{equation}\label{eq:sDK-martingale-QV}
    \mathd \langle M^{f} \rangle_t = \frac{2}{N^2} \sum_{i = 1}^N | G^{- 1 /
   2} \nabla f (X^i_t) |^2 \mathd t = \frac{2}{N} \mu_t (| G^{- 1 /
   2} \nabla f (X^i_t) |^2) \mathd t.
\end{equation}

While the martingale problem for $\mu$ has a unique solution~\cite{Konarovskyi}, the analysis of numerical approximation schemes naturally leads us to consider the associated nonlinear Dean--Kawasaki equation, which (formally) provides a continuum description of the particle system.
See also [22] for the derivation of the martingale problem if particles are given by independent continuous Markov processes on a general Polish space.


Indeed, let us pretend that $\mu_t$ has a density with respect to Lebesgue measure,
which we denote by $\rho^\lambda_t$. Then
\begin{equation}\label{eq:sDK-Euclidean-f}
    \mu_s(f) = \langle \rho^\lambda_s, f\rangle_{L^2(\mathcal{D})}
\end{equation}
 and
\begin{equation}\label{eq:sDK-Euclidean-Laplacian}
    \mu_s(\Delta_G f) = \langle \rho^\lambda_s, \Delta_G f\rangle_{L^2(\mathcal{D})} = \langle \Delta_G^\ast \rho^\lambda_s, f\rangle_{L^2(\mathcal{D})} ,
\end{equation}
where the adjoint is computed with respect to Lebesgue measure, 
\begin{equation*}
    \Delta_G^{\ast} f = - \nabla \cdummy (b f) + \sqrt{2} D^2 : (G^{- 1} f),
\end{equation*}
with $A:B = \operatorname{Tr}(AB)$. To find a formal representation of the martingale as a stochastic integral, let $W$ be a cylindrical Wiener process with identity covariance on the Hilbert space $U=L^2(\mathcal{D}; \mathbb R^d)$ with inner product $\langle f, g\rangle_U = \int_{\mathcal{D}} f(x) \cdot g(x) \mathd x$, i.e. formally (because $W(t)$ does not actually take values in $U$) it holds $\mathbb E[\langle W(t), f\rangle_U^2] = t \| f\|_U^2$. Then
\begin{eqnarray}\nonumber
  \int_0^t \mu_s (| G^{- 1 / 2} \nabla f (X^i_t) |^2) \mathd s & = & \int_{[0, t]
  \times \mathcal{D}} \rho^{\lambda}_s(x) | G^{- 1 / 2}(x) \nabla f(x) |^2 \mathd s \mathd x,\\ \nonumber
  & = & \int_{0}^t \left\| \sqrt{\rho^{\lambda}_s} G^{- 1 / 2}
  \nabla f \right\|_U^2 \mathd s\\ \nonumber
  & = & \left\langle \int_{0}^\cdot \langle \sqrt{\rho^{\lambda}_s} G^{- 1 / 2}
  \nabla f, \mathd W_s\rangle_U\right\rangle_t\\
  & = & \left\langle \int_{0}^\cdot \left\langle f, \nabla \cdot (\sqrt{\rho^{\lambda}_s} G^{- 1 / 2}\mathd W_s)\right\rangle_{L^2(\mathcal{D})}\right\rangle_t, \label{eq:sDK-Euclidean-Noise}
\end{eqnarray}
where we used that $-\nabla\cdot$ is the adjoint of $\nabla$, as an operator from $U$ to $L^2(\mathcal{D})$.
Combining \eqref{eq:sDK-martingale-problem}, \eqref{eq:sDK-martingale-QV}, \eqref{eq:sDK-Euclidean-f}, \eqref{eq:sDK-Euclidean-Laplacian} and \eqref{eq:sDK-Euclidean-Noise}, we   obtain the Dean--Kawasaki equation for the density $\rho^\lambda$:
\begin{equation}\label{eq:sDK-Euclidean}
    \mathd \rho^\lambda_t = \Delta_G^{\ast} \rho^\lambda_t \mathd t +
   \sqrt{\frac{2}{N}} \nabla \cdummy \left( \sqrt{\rho^\lambda_t} G^{- 1 /
   2} \mathd W_t \right).
\end{equation}

\subsubsection{Intrinsic representation}

We next consider the formulation with respect to the surface area measure 
 $\nu$ given by \eqref{def:nu}. This choice is natural, as it incorporates the geometric structure of the surface. We denote by $\rho^\nu$  the formal density of $\mu$ with respect to $\nu$. Then for
\begin{equation}\label{defHsurface}
    H := L^2(\mathcal{D}, \nu),
\end{equation}
we have
\begin{equation}\label{eq:sDK-Intrinsic-f}
    \mu_s(f) = \langle \rho^\nu_s, f\rangle_{H}
\end{equation}
 and
\begin{equation}\label{eq:sDK-Intrinsic-Laplacian}
    \mu_s(\Delta_G f) = \langle \rho^\nu_s, \Delta_G f\rangle_{H} = \langle \Delta_G \rho^\nu_s, f\rangle_{H},
\end{equation}
where we used that $\Delta_G$ is self-adjoint in $H$. To find a formal representation of the martingale as a stochastic integral, let $W^G$ be a cylindrical Wiener process on the Hilbert space
\begin{equation}\label{def:Usurface}
        U=L^2(\mathcal{D},\nu; \mathbb R^d, (\cdot, \cdot)_G),\qquad \langle f, g\rangle_U = \int_\mathcal{D} (f(x), g(x))_{G(x)} \sqrt{|G(x)|}\mathd x,
\end{equation}
with identity covariance, i.e. formally (because $W^G_t$ does not take values in $U$)
\begin{equation}\label{Wg_stat}
        \mathbb E[\langle W^G_t, f\rangle_U^2] = t \| f\|_U^2 .
\end{equation}
Then, noting that $|G^{-1/2} \nabla f|^2 = \|\nabla_G f\|_G^2$,
\begin{eqnarray}\nonumber
  \int_0^t \mu_s (| G^{- 1 / 2} \nabla f (X^i_t) |^2) \mathd s & = & \int_{0}^t \left\| \sqrt{\rho^{\nu}_s} 
  \nabla_G f \right\|_U^2 \mathd s\\ \nonumber
  & = & \left\langle \int_{0}^\cdot \langle \sqrt{\rho^{\nu}_s} 
  \nabla_G f, \mathd W^G_s\rangle_U\right\rangle_t\\
  & = & \left\langle \int_{0}^\cdot \left\langle f, \operatorname{div}_G (\sqrt{\rho^{\nu}_s} \mathd W^G_s)\right\rangle_{L^2(\mathcal{D})}\right\rangle_t, \label{eq:sDK-Intrinsic-Noise}
\end{eqnarray}
where we used that $-\operatorname{div}_G$ is the adjoint of $\nabla_G$, as an operator from $U$ to $L^2(\mathcal{D})$.
Combining \eqref{eq:sDK-martingale-problem}, \eqref{eq:sDK-martingale-QV}, \eqref{eq:sDK-Intrinsic-f}, \eqref{eq:sDK-Intrinsic-Laplacian} and \eqref{eq:sDK-Intrinsic-Noise}, we   obtain the surface Dean--Kawasaki equation for the density $\rho^\nu$:
\begin{equation}\label{eq:sDK-intrinsic}
    \mathd \rho^\nu_t = \Delta_G \rho^\nu_t \mathd t +
   \sqrt{\frac{2}{N}} \operatorname{div}_G \left( \sqrt{\rho^\nu_t} \mathd W^G_t \right).
\end{equation}

We note that this is essentially the same equation as (1.10) in \cite{DelloSchiavo2024massive}, which formulates a related result for independent particles on Dirichlet spaces via Hilbert tangent bundles.

\begin{remark}[On the interpretation of $W^G$]\label{rem:stwn}
Note that statistically $W^G= |G|^{-1/4}G^{-1/2}W$, where $W$ is a standard $\mathbb{R}^d$-valued space-time noise. Indeed, for $f\in U$ it holds
\[
    \mathbb{E}\left[\left<|G|^{-1/4}G^{-1/2}W_t, f\right>^2_U\right] = 
    \mathbb{E}\left[\left<W_t, |G|^{1/4}G^{1/2}f\right>^2_{L^2(\mathcal{D}, \mathbb{R}^d)}\right]
    =t \||G|^{1/4}G^{1/2}f\|^2_{L^2(\mathcal{D}, \mathbb{R}^d)}=t\|f\|^2_U,
\]
which coincides with \eqref{Wg_stat}.
\end{remark}

Starting from the intrinsic formulation of the surface Dean–Kawasaki Equation~\eqref{eq:sDK-intrinsic}, together with the representation of the noise and the definitions of the metric differential operators \eqref{nablaG}, \eqref{def:divG} and \eqref{def:LB}, we obtain a coordinate representation of the equation in terms of local coordinates and standard Euclidean differential operators: 
\begin{eqnarray}\label{eq:numDK}
    \mathd\rho^\nu_t = \frac{1}{\sqrt{|G|}} \nabla \cdot (\sqrt{|G|} G^{-1}  \nabla \rho^\nu_t) \mathd t + \sqrt{\frac{2}{N}} \frac{1}{\sqrt{|G|}}\nabla \cdot (\sqrt{\rho^\nu_t} | G |^{1 / 4} G^{- 1 / 2} \mathd W_t ),
\end{eqnarray}
where $W$ is the usual $\mathbb{R}^d$-valued space-time white noise. This formulation will serve as the basis for the numerical discretization.

\begin{remark}
    If we start from the Brownian particles that are given via Stratonovich formulation on $\Gamma$, as described in Remark \ref{rem:StrBM}, the martingale formulation of the Dean--Kawasaki equation for the corresponding empirical distribution $\mu^{\Gamma}$ is
    \[
    \mathd\mu_t^{\Gamma}(f)=\mu_t^{\Gamma}(\Delta_\Gamma f) \mathd t + \mathd M_t^f,
    \]
    where $\Delta_\Gamma = \operatorname{div}_\Gamma \nabla_\Gamma$, with $\nabla_\Gamma$ as in~\eqref{eq:tangential-gradient}, with $-\operatorname{div}_\Gamma$ the adjoint of $\nabla_\Gamma$ as an operator from $L^2(\Gamma, S)$ to $L^2(\Gamma, S; T\Gamma)$ (recall that the surface measure $S$ is defined in \eqref{partial_int}), and
    where $M^f$ is a continuous martingale with quadratic variation
    \[
     \mathd \langle M^{f} \rangle_t = \frac{2}{N}\mu^{\Gamma}_t(|\nabla_\Gamma f|^2)\mathd t .
    \]
    Next, if we pretend that $\mu^\Gamma$ has a density $\rho^S$ with respect to the surface measure $S$ defined in \eqref{partial_int}, then for
    \[
        H:=L^2(\Gamma,S) \quad  \text{and}\quad U := L^2(\Gamma,S;T\Gamma),
    \]
    and for $W^S$ a cylindrical Wiener process with identity covariance on $U$,
    \[
        \mathbb{E}[\langle W_t^S, f\rangle_U^2]=t\|f\|_U^2,
    \]
    we have
    \[
        \mu^{\Gamma}_t(|\nabla_\Gamma f|^2) \mathd t = \|\sqrt{\rho_t^S}\nabla_\Gamma f\|^2_U \mathd t =\mathd \langle\int_0^\cdot \langle\sqrt{\rho_s^S}\nabla_\Gamma f, \mathd W_s^S\rangle_U
\rangle_t=\langle\int_0^\cdot \langle f, \operatorname{div}_\Gamma \left(\sqrt{\rho_s^S}\mathd W_s^S \right)\rangle_H\rangle_t.
    \]
Hence, using also that $\Delta_\Gamma$ is self-adjoint in $L^2(\Gamma, S)$, the surface Dean--Kawasaki equation for $\rho^S$ is
\[
\mathd \rho^S_t=\Delta_\Gamma \rho_t^S \mathd t +\sqrt{\frac2N}\operatorname{div}_\Gamma\left( \sqrt{\rho^S_t}\mathd W_t^S\right).
\]
  
\end{remark}

\subsubsection{
Fluctuation dissipation  relation}

The  linearization of the intrinsic surface Dean--Kawasaki Equation~\eqref{eq:sDK-intrinsic} around its time-stationary mean field limit
\[
    \overline{\rho}^\nu \equiv \frac{1}{\nu(\mathcal{D})}
\]
is given by the Ornstein-Uhlenbeck process
\begin{equation}\label{eq:lin}
       \mathd Z_t = \Delta_G Z_t \mathd t +
   \sqrt{\frac{2}{N}} \operatorname{div}_G \left( \sqrt{\overline{\rho}^\nu} \mathd W^G_t \right).
\end{equation}
Then for the spaces $H$ and $U$ defined by \eqref{defHsurface} and \eqref{def:Usurface} respectively, the operator $A:=\Delta_G$ is a self-adjoint operator on $H$, the cylindrical Wiener process $W^G$ on $U$ has covariance $Q:=\operatorname{Id}$, and $B:=\sqrt{\frac{2}{N}\overline{\rho}^\nu}\operatorname{div}_G$ is a linear operator from $U$ to $H$ with adjoint $B^*=-\sqrt{\frac{2}{N}\overline{\rho}^\nu}\nabla_G$. 
By \cite{DaPrato2014}, Section~11.3, the covariance $C$ of the centered Gaussian equilibrium distribution satisfies 
\begin{equation*}
    AC+CA=-BQB^*,
\end{equation*}
which implies that
\[
C = \frac{\overline{\rho}^\nu}{N}\operatorname{Id} ,
\]
which matches the statistics of the coarse-grained particle system and confirms the choice of the fluctuating noise in \eqref{eq:sDK-intrinsic}.
Hence, the fluctuation dissipation relation is fulfilled. 

Furthermore, the formal covariance function of $Z$ at stationarity is given by
\begin{equation}\label{cFDR}
    \mathbb{E}[Z(x)Z(y)]=\frac{\overline{\rho}^\nu}{N\sqrt{|G|(x)}}\delta(x-y),
\end{equation}
which we want to preserve under numerical discretization.

\subsection{Surface Dean--Kawasaki equation for interacting particle system on fixed hypersurface}

We introduce potentials that lead to invariant Gibbs measures: Let $V \in
C^2_b (\mathcal{D} ; \mathbb{R})$ and let $U \in C^2_b (\mathcal{D} \times \mathcal{D} ; \mathbb{R})$ be
symmetric and consider the following (non-normalized) measure on $D^N$
\[ \nu_N (\mathd x) := \exp \left( - \sum_i V (x_i) - \frac{1}{2 N} \sum_{i,
   j} U (x_i, x_j) \right) \prod_{i = 1}^N \sqrt{| G | (x_i)} \mathd x_i . \]
Our goal is to find a reversible diffusion process $(X^1, \ldots, X^N)$ with invariant measure $\nu_N$, in analogy with the usual Langevin diffusion. This
will be based on the following computation.

\begin{remark}\label{rem:LR}
  The operator $\operatorname{div}_G$ satisfies the following Leibniz type rule:
  \[ \operatorname{div}_G (f g) = \frac{1}{\sqrt{| G |}} \nabla \cdummy \left(
     \sqrt{| G |} f g \right) = \frac{1}{\sqrt{| G |}} \nabla \cdummy \left(
     \sqrt{| G |} f \right) g + \frac{1}{\sqrt{| G |}} \sqrt{| G |} f \nabla
     \cdummy g = (\operatorname{div}_G f) g + f \cdummy \nabla g. \]
\end{remark}
  For arbitrary  $F \in C^2 (\mathcal{D}^N)$, the operator 
  \[
  \mathcal{L}_{N, F} = e^F \sum_{i = 1}^N \operatorname{div}_{G, x_i} (e^{- F} \nabla_{G, x_i}) , 
  \]
 is  symmetric with respect to the measure $e^{- F (x)} \prod_{i =
  1}^N \sqrt{| G | (x_i)} \mathd x_i$. 
  Using the Leibniz rule from Remark \ref{rem:LR}, the operator $\operatorname{div}_{G}$ can be rewritten as follows:
  \[ \mathcal{L}_{N, F}  = \sum_{i = 1}^N (\Delta_{G, x_i} - \nabla_{x_i} F
     \cdummy \nabla_{G, x_i}) = \sum_{i = 1}^N (\Delta_{G, x_i} - \nabla_{G,
     x_i} F \cdummy \nabla_{x_i}) . \]
  Choosing $F (x) = \sum_k V (x_k) + \frac{1}{2 N} \sum_{k, \ell} U (x_k,
  x_{\ell})$, we obtain
  \[ \nabla_{x_i} F (x) = \nabla V (x_i) + \frac{1}{2 N} \sum_{\ell}
     \nabla_{x_i} U (x_i, x_{\ell}) + \frac{1}{2 N} \sum_k \nabla_{x_i} U
     (x_k, x_i) = \nabla V (x_i) + \frac{1}{N} \sum_j \nabla_{x_i} U (x_i,
     x_j), \]
  where we used that $U$ is symmetric.

This leads to the definition of a Langevin diffusion for $\nu_N$:

\begin{definition}
  We call a stochastic process $(X^1, \ldots, X^N)$ with values in $\mathcal{D}^N$ the
  {\tmem{Langevin diffusion with invariant measure $\nu_N$}}, if it is a
  Markov process with generator
  \begin{eqnarray*}
    \mathcal{L}_N & = & \sum_{i = 1}^N \left( - \left( \nabla_G V (x_i) +
    \frac{1}{N} \sum_{j = 1}^N \nabla_{G, x_i} U (x_i, x_j) \right) \cdummy
    \nabla_{x_i} + \Delta_{G, x_i} \right)\\
    & = & \sum_{i = 1}^N \left( - \left( \nabla_G V (x_i) + \frac{1}{N}
    \sum_{j = 1}^N \nabla_{G, x_i} U (x_i, x_j), \nabla_{G, x_i} \right)_G +
    \Delta_{G, x_i} \right).
  \end{eqnarray*}
\end{definition}

The SDE for this generator is
\begin{eqnarray}\label{eq:Lang}
  \mathd X^i_t & = & b (X^i_t) \mathd t - \nabla_G V (X^i_t) \mathd t -
  \frac{1}{N} \sum_{j = 1}^N \nabla_{G, x_i} U (X^i_t, X^j_t) \mathd t +
  \sqrt{2} G^{- 1 / 2} (X_t^i) \mathd B^i_t \\
  & = & b (X^i_t) \mathd t - \nabla_G V (X^i_t) \mathd t - \int_D \nabla_{G,
  x_i} U (X^i_t, x) \mu_t (\mathd x) \mathd t + \sqrt{2} G^{- 1 / 2} (X_t^i)
  \mathd B^i_t,\nonumber 
\end{eqnarray}
where $b$ is defined by \eqref{eq:BMonSurface}. Let $\mu$ be the empirical distribution. In a similar way as for \eqref{eq:sDK-Euclidean}, we derive the  Dean--Kawasaki equation for the formal density $\rho^\lambda$ of $\mu$ with respect to Lebesgue measure:
\begin{equation*}
    \mathd \rho_t^\lambda = \Delta_G^{\ast} \rho_t^\lambda \mathd t +
   \nabla \cdummy \left( \left( \nabla_G V + \int_D \nabla_G U (\cdummy, x)
   \rho_t^\lambda(x) \mathd x \right) \rho_t^\lambda \right) \mathd t +
   \sqrt{\frac{2}{N}} \nabla \cdummy \left( \sqrt{\rho_t^\lambda} G^{- 1 /
   2} \mathd W \right) . 
\end{equation*} 

The surface Dean--Kawasaki equation in the intrinsic form for the (formal) density $\rho^\nu$ with respect to the surface area element $\nu$ is
\begin{equation}\label{eq:sDK-nu-potential}
    \mathd\rho^\nu_t = \Delta_G \rho^\nu_t \mathd t+  \text{div}_G \left(\left(\nabla_G V +\int_D \nabla_G U (\cdummy, x)
   \rho_t^\nu(x) \mathd x \right)\rho^\nu_t\right) \mathd t+ \sqrt{\frac{2}{N}}\text{div}_G(\sqrt{\rho^\nu}\mathd W^G).
\end{equation}

\subsection{The surface Dean--Kawasaki equation on a moving, interacting surface.}

We now consider a more general setting in which the hypersurface is moving and the induced metric $G$ is governed by a stochastic process
$\eta$ and is therefore time-dependent.
 Furthermore, we allow the particles and the hypersurface to interact. A typical example of a moving surface is given by the ultraviolet-truncated Langevin dynamics of a Helfrich elastic membrane, which can be represented as a multidimensional Ornstein--Uhlenbeck process; see \cite[Section 4]{DuncanElliottPavliotisStuart}
for further details.

 More precisely,  we assume that $\eta$ solves another stochastic differential equation,
coupled with the empirical measure of the particle system, and that $G_t = G (\cdummy, \eta_t)$:
\begin{equation}\label{eq:eta}
    \mathd \eta_t = a (\eta_t, \mu_t) \mathd t + \sigma (\eta_t, \mu_t)
   \mathd B_t,
\end{equation}
 Moreover, we assume
that
\[
    \mathd X^i_t = b_t (X^i_t) \mathd t - \nabla_{G_{t}} V (X^i_t)
   \mathd t - \int_\mathcal{D} \nabla_{G_{t}} U (X^i_t, x) \mu_t (\mathd x) \mathd
   t + \sqrt{2} G^{- 1 / 2}_{t} (X_t^i) \mathd B^i_t,
\]
where $B$ is independent from $(B^1, \ldots, B^N)$, and
\[
  b_t (x) = \frac{1}{\sqrt{| G_{t} | (x)}} \nabla_x \cdummy
  \left( \sqrt{| G_{t} | (x)} G^{- 1}_{t} (x) \right).
\] 

Then, the Dean--Kawasaki equation for the formal density $\rho^\lambda$ of the empirical distribution with respect to Lebesgue measure is given by 
\begin{eqnarray*}
  \mathd \rho^\lambda_t & = & \Delta_{G_{t}}^{\ast} \rho^\lambda_t \mathd t + \nabla \cdummy \left( \left( \nabla_{G_{t}} V +
  \int \nabla_{G_{t}} U (\cdummy, x) \rho^\lambda_t (x) \mathd x
  \right) \rho^\lambda \right) \mathd t\\
  &  & + \sqrt{\frac{2}{N}} \nabla \cdummy \left( \sqrt{\rho^\lambda}
  G_{t}^{- 1 / 2} \mathd W_t \right),\\
  \mathd \eta_t & = & a (\eta_t,\rho_t^\lambda) \mathd t + \sigma
  (\eta_t, \rho^\lambda_t) \mathd B_t,
\end{eqnarray*}
where the space-time white noise $W$ is independent of $B$, the independence being inherited from that of $(B^1,\dots,B^N)$ with $B$.

Next, we derive the equation for the formal density $\rho^{\nu_t}$ of $\mu_t$ with respect to
\begin{equation*}
    \nu_t (\mathd x)  := g_t(x)\mathd x:=\sqrt{| G_t (x) |} \mathd x. 
\end{equation*}
   
Since formally  $\rho^{\nu_t} \nu_t = \mu_t$, from It\^o's formula we have
\begin{equation}\label{eq:all-moving-prelim}
    \mathd \rho^{\nu_t} \nu_t (f) = \rho^{\nu_t} \nu_t \left( \Delta_{G_t} f -
   \left( \nabla_{G_t} V + \int_\mathcal{D} \nabla_{G_{t}} U (\cdummy, x) \rho^{\nu_t}_t
   \nu_t (\mathd x), \nabla_{G_{t}} f \right)_{G_t} \right) \mathd t + \mathd
   M^f_t,
\end{equation}
with a continuous martingale $M^f$ that satisfies
\[ \mathd \langle M^f_t \rangle = \frac2N \rho^{\nu_t}_t \nu_t (| \nabla_{G_t} f
   |_{G_t}^2)\mathd t, \qquad \mathd \langle M^f, B^\eta \rangle_t = 0. \]
Thus, we can represent the martingale as follows: Let $W$ be a space-time
white noise in $L^2 (D ; \mathbb{R}^d)$, independent of $B^\eta$, and let
\begin{equation}
    W^G_t := \int_0^t g_s^{- 1 / 2} G_s^{- 1 / 2} \mathd W_s . 
\end{equation}
Let also
\[ U_t := L^2 (\mathcal{D}, \nu_t ; \mathbb{R}^d, (\cdummy, \cdummy)_{G_t}), \]
so that formally for any adapted process $u$ with $\int_0^T \| u_t \|_{U_t}^2 \mathd t < \infty$ for all $T>0$:
\[ \mathd \left\langle \int_0^{\cdummy} \langle u_s, \mathd W^G_s \rangle_{U_s}
   \right\rangle_t = \mathd \left\langle \int_0^{\cdummy} \langle g_t^{1 / 2}
   G_t^{1 / 2} u_s, \mathd W_s \rangle_{L^2 (D ; \mathbb{R}^d)} \right\rangle_t
   = \| g_t^{1 / 2} G_t^{1 / 2} u_t \|_{L^2 (D ; \mathbb{R}^d)}^2 \mathd t = \|
   u_t \|_{U_t}^2 \mathd t. \]
Then we have with  $H_t := L^2 (\mathcal{D}, \nu_t)$
\begin{align*}
  \frac{N}{2}\mathd \langle M^f_t \rangle & = \rho^{\nu_t}_t \nu_t (| \nabla_{G_t} f
  |_{G_t}^2) \mathd t = \left\| \sqrt{\rho^{\nu_t}_t} \nabla_{G_t} f \right\|_{U_t}^2 \mathd
  t = \mathd \left\langle \int_0^{\cdummy} \left\langle
  \sqrt{\rho^{\nu_t}_t} \nabla_{G_s} f, \mathd W^G_s \right\rangle_{U_s}
  \right\rangle_t\\
  & = \mathd \left\langle \int_0^{\cdummy} \left\langle f, \tmop{div}_{G_s}
  \left( \sqrt{\rho^{\nu_s}_s} \mathd W^G_s \right) \right\rangle_{H_s}
  \right\rangle_t.
\end{align*}
Now we  integrate $\Delta_{G_t}$ and $\nabla_{G_t}$ in \eqref{eq:all-moving-prelim} by parts to obtain the equation
\begin{align*}
  \mathd \langle \rho^{\nu_t}_t, f \rangle_{H_t} & = \left\langle
  \Delta_{G_t} \rho^{\nu_t}_t + \tmop{div}_{G_t} \left( \left(
  \nabla_{G_t} V + \int \nabla_{G_t} U (\cdummy, x) \rho^{\nu_t}_t (x) \nu_t
  (\mathd x) \right) \rho^{\nu_t}_t \right), f \right\rangle_{H_t} \mathd t\\
  &\quad +\sqrt{\frac2N} \left\langle f, \tmop{div}_{G_t} \left( \sqrt{\rho^{\nu_t}} \mathd
  W^G_t \right) \right\rangle_{H_t},
\end{align*}
which we can rewrite as
\begin{align*}
    \mathd (\rho^{\nu_t}_t g_t) & = \left( \Delta_{G_t} \rho^{\nu_t}_t +
   \tmop{div}_{G_t} \left( \left( \nabla_{G_t} V + \int \nabla_{G_t} U
   (\cdummy, x) \rho^{\nu_t}_t (x) \nu_t (\mathd x) \right) \rho^{\nu_t}_t
   \right) \right) g_t \mathd t \\
   &\quad + \sqrt{\frac2N} g_t \tmop{div}_{G_t} \left(
   \sqrt{\rho^{\nu_t}} \mathd W^G_t \right) .
\end{align*}
Since this is not a closed equation,  we  divide both sides by $g_t$. Using that
\[ \mathd g_t^{- 1} = - g_t^{- 2} \mathd g_t + g_t^{- 3} \mathd \langle g
   \rangle_t, \qquad \mathd \left\langle g^{- 1}, \int_0^{\cdummy} f_s W^G_s
   \right\rangle_t = 0, \]
we obtain the interacting surface Dean--Kawasaki equation on a time-dependent surface with particle-surface coupling:
\begin{align} \nonumber
  \mathd \rho^\nu_t & = \mathd (\rho^{\nu_t}_t g_t) g_t^{- 1}\\ \nonumber
  & = g_t^{- 1} \mathd (\rho^{\nu_t}_t
  g_t) + (\rho^{\nu_t}_t g_t) \mathd g_t^{- 1} + \mathd \langle g^{- 1},
  \rho^{\nu} g \rangle_t\\ \nonumber
  & = \left( \Delta_{G_t} \rho^{\nu_t}_t + \operatorname{div}_{G_t} \left(
  \left( \nabla_{G_t} V + \int \nabla_{G_t} U (\cdummy, x) \rho^{\nu_t}_t (x)
  \nu_t (\mathd x) \right) \rho^{\nu_t}_t \right) \right) \mathd t \\
  & \quad + \sqrt{\frac2N}\operatorname{div}_{G_t} \left( \sqrt{\rho^{\nu_t}} \mathd W^G_t \right)
   - \rho^{\nu_t}_t (g_t^{- 1} \mathd g_t - g_t^{- 2} \mathd \langle g \label{eq:sDK-full}
  \rangle_t) .
\end{align}

The rigorous formulation of the surface Dean--Kawasaki equation is given in terms of a martingale problem, similarly to \cite{konarovskyi2020dean,Konarovskyi}. In the following definition, $\mathcal{P}(\mathcal{D})$ denotes the probability measures on $\mathcal{D}$.

\begin{definition}[Martingale problem]\label{def:sDK-martingale-problem}
  A stochastic process $(\mu, \eta)$ in $C (\mathbb{R}_+, \mathcal{P} (\mathcal{D})
  \times \mathbb{R}^e)$ is called a martingale solution to the
  sDK equation \eqref{eq:sDK-full}, if $\eta$ is a (probabilistically) weak  solution to \eqref{eq:eta}
  and for all $f \in C^2 (\mathcal{D})$ the process
  \[ M^f_t = \mu_t (f) - \mu_0 (f) - \int_0^t \mu_s \left( \Delta_{G_s} f -
     \nabla_{G_s} V \cdummy \nabla f - \int \nabla_{G_s} U (\cdummy, x) \mu_s
     (\mathd x) \cdummy \nabla f \right) \mathd s, \]
  $t\ge 0$, is a continuous martingale with
  \[ \langle M^f \rangle_t = \frac{2}{N} \int_0^t \mu_s (| \nabla_{G_s} f
     |_{G_s}^2) \mathd s, \qquad \langle M^f, B \rangle_t = 0, \qquad t
     \geq 0. \]
\end{definition}

As usual, in the regime where the martingale problem is formally associated to the empirical distribution of interacting particles coupled to a fluctuating surface,  existence of solutions to the martingale problem follows from It\^o's formula provided the particle-surface system admits weak solutions. In different parameter regimes (prefactor $\frac2\alpha$ instead of $\frac2N$, with $\alpha \not\in \mathbb N$, or $\mu_0$ not of the form $\frac1N \sum_{i=1}^N \delta_{x_i}$), there might not exist any solutions to the martingale problem, in the spirit of the non-existence result of \cite{Konarovskyi}.

If $U = 0$ and if $a$ and $\sigma$ are independent of $\mu$, we
obtain the weak uniqueness of solutions to the martingale problem. This result is analogous to \cite{Konarovskyi}, although our proof is based on the well-posedness of the moment problem for $\mu$, rather than exact duality, and therefore it seems slightly more robust. As in \cite{konarovskyi2020dean}, this could be extended to $U\neq 0$ and to $a$ which depends on $\mu$, as long as $U$ and $a$ can be removed by a Girsanov transform.

\begin{theorem}[Weak uniqueness]
Let $U = 0$ and let $a$ and $\sigma$ be independent of $\mu$. Let
$\mathcal{G}_t =\mathcal{F}_t \vee \mathcal{F}^B_{\infty}$, $t\ge 0$, where
$\mathcal{F}^B$ is the canonical filtration generated by $B$. We assume that
for all $\mathcal{G}_0$-measurable $f \in C^2 (\mathcal{D})$ and $t \geq 0$ there
exists a $\mathcal{G}_0$-measurable solution $P_{\cdummy, t} f \in C^{1, 2}
(\mathbb{R}_+ \times \mathcal{D})$ to the random Kolmogorov backward equation
\[ \partial_s P_{s, t} f = - (\Delta_{G_s} P_{s, t} f - \nabla_{G_s} V \cdummy
   \nabla P_{s, t} f), \qquad P_{t, t} f = f, \]
and that strong existence and weak uniqueness hold for~\eqref{eq:eta} (which is in particular the case if $a$ and $\sigma$ are Lipschitz continuous). Then the law of the martingale solution $(\mu, \eta)$ to
the sDK Equation~\eqref{eq:sDK-full} is uniquely determined by its initial distribution.
\end{theorem}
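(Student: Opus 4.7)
The plan is to use a duality and moment-problem approach. The first step is to condition on $\mathcal G_0 = \mathcal F_0 \vee \mathcal F^B_\infty$: under $\mathbb P(\,\cdot\mid \mathcal G_0)$ both the initial condition $\mu_0$ and the trajectory $(\eta_s)_{s\ge 0}$---and hence the random coefficients $(G_s)_{s\ge 0}$---become deterministic data. The orthogonality $\langle M^f, B\rangle = 0$ in Definition~\ref{def:sDK-martingale-problem}, together with a standard initial-enlargement argument via regular conditional probabilities, implies that each $M^f$ remains a continuous $\mathcal G_t$-martingale with the same quadratic variation under $\mathbb P(\,\cdot\mid \mathcal G_0)$. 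The problem thus reduces to showing that, for $\mathbb P$-almost every realization of $\mathcal G_0$, the conditional martingale problem with fixed data $(\mu_0,(G_s)_{s\ge 0})$ has a unique solution in law.

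To solve the conditional problem, I would derive a closed hierarchy of moment equations. Applying It\^o's formula to the product $\prod_{i=1}^k \mu_t(f_i)$ for $\mathcal G_0$-measurable $f_i\in C^2(D)$, together with $d\langle M^{f_i}, M^{f_j}\rangle_t = \tfrac{2}{N}\mu_t((\nabla_{G_t}f_i,\nabla_{G_t}f_j)_{G_t})\,dt$, produces a bounded-variation part with a $k$-particle drift $\sum_i (\prod_{j\ne i}\mu_t(f_j))\,\mu_t(\mathcal L_t f_i)$ (where $\mathcal L_t f := \Delta_{G_t}f - \nabla_{G_t}V\cdot\nabla f$) and a $1/N$ correction coupling pairs of test functions through the diagonal. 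Taking $\mathbb E[\,\cdot\mid \mathcal G_0]$ kills the noise, and the conditional moments $m^{(k)}_t(f_1,\dots,f_k) := \mathbb E[\prod_i \mu_t(f_i)\mid \mathcal G_0]$ satisfy a linear equation whose homogeneous part is propagated by $\sum_i \mathcal L_{t,x_i}$ on $C^2(D^k)$ and whose source term depends only on the $(k-1)$-st moment, via a coalescence of two of the test functions on the diagonal. The base case $k=1$ follows from the hypothesis: applying It\^o (conditional on $\mathcal G_0$) to $s\mapsto \mu_s(P_{s,t}f)$ and using $\partial_s P_{s,t}f = -\mathcal L_s P_{s,t}f$ shows this process is a bounded $\mathcal G_s$-martingale on $[0,t]$, whence $m^{(1)}_t(f) = \mu_0(P_{0,t}f)$.

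The higher moments are then obtained inductively via a Duhamel formula with propagator $P^{\otimes k}_{s,t}$ on $C^2(D^k)$, which tensorizes from the one-particle hypothesis, and with source determined by $m^{(k-1)}$; this fixes each $m^{(k)}_t$ uniquely. Since $D$ is compact, $\mathcal P(D)$ is a compact Polish space whose laws are determined by polynomial cylinder functionals $\mu\mapsto \prod_i \mu(f_i)$, so the conditional one-time law of $\mu_t$ is uniquely determined. Uniqueness of the finite-dimensional distributions $(\mu_{t_1},\dots,\mu_{t_n})$ is then obtained by iteration: on each subinterval $[t_{j-1},t_j]$ the restriction of $\mu$ is again a martingale solution starting from $\mu_{t_{j-1}}$, so repeating the moment procedure inside nested conditional expectations chains the joint moments. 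Finally, unconditioning on $\mathcal G_0$ and invoking the weak uniqueness of $\eta$ yields uniqueness of the joint law of $(\mu,\eta)$. The main obstacle I anticipate is the initial-enlargement/regular-conditional-probability step: orthogonality of continuous martingales does not automatically yield the conditional martingale property outside a Brownian-filtration setting, so one must exploit $\langle M^f, B\rangle = 0$ together with path continuity and the specific structure of the martingale problem to carry it through; once this is settled, the inductive tensorization and Duhamel expansion proceed cleanly because the random coefficients are frozen after conditioning.
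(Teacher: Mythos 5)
Your proposal matches the paper's argument in all essentials: recognize that the $\langle M^f,B\rangle=0$ condition is what lets you enlarge the filtration by $\mathcal{F}^B_\infty$, use the random backward-Kolmogorov solution $P_{s,t}f$ to kill the drift so that $\mu_s(P_{s,t}f)$ is a $\mathcal{G}_s$-martingale, derive a closed moment hierarchy whose $k$-th level has a source depending only on level $k-1$ (scaled by $1/N$), induct, and finally iterate over time points by conditioning. The one place where the paper packages things differently is the martingale-extension step you flag as the ``main obstacle'': rather than passing to regular conditional probabilities $\mathbb{P}(\cdot\mid\mathcal{G}_0)$ so as to treat the data as deterministic, the paper proves directly, via the martingale representation theorem for $B$ together with $\langle M^f,B\rangle=0$ (Lemma~\ref{lem:martingale-filtration}), that each $M^f$ is already a $\mathcal{G}$-martingale, and then does the whole computation with conditional expectations $\mathbb{E}[\,\cdot\mid\mathcal{G}_0]$. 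This sidesteps the measurability and disintegration technicalities entirely, so the obstacle you anticipated is not actually encountered. The other difference is purely cosmetic: the paper applies It\^o to $\varphi(x)=x^m$ with $x=\mu_s(P_{s,t}f)$ and polarizes at the end, rather than applying It\^o directly to the product $\prod_i\mu_t(f_i)$ as you propose; both yield the same $(k-1)$-linear source term and the same Duhamel/tensorization structure.
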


\begin{proof}
    We start by showing that $\mathbb{E} [F (\mu_t) \mid \mathcal{G}_0]$ is uniquely
determined by $\tmop{law} (\mu_0)$ and the path $(\eta_s)_{s \geq 0}$,
for all $t \geq 0$ and $F \in C (\mathcal{P} (D))$. By a density
argument and using the linearity of $f \mapsto \mu_t (f)$, it suffices to
consider $F (\mu) = \varphi (\mu_t (f))$ for $f \in C^2$, and it is
convenient to allow $\mathcal{G}_0$-measurable random $f \in C^2$. Since
$\mu_t$ is a probability measure for each $t \geq 0$, the random variable
$\mu_t (f)$ is bounded and thus we can restrict to $\varphi (x) = x^m$ by the
Stone-Weierstra{\ss} theorem. The lemma following below this proof shows that the martingale $M^f$ in Definition~\ref{def:sDK-martingale-problem} remains a martingale in the filtration $\mathcal G$. We combine this with the telescope sum $\mu_t (P_{t, t} f) - \mu_0 (P_{0, t} f) = \lim_n \sum_{k=0}^{n-1}(\mu_{(k+1)t/n} (P_{(k+1)t/n, t} f) - \mu_{kt/n} (P_{kt/n, t} f))$ to show that for all
$\mathcal{G}_0$-measurable random $f$
\[ \mu_t (P_{t, t} f) = \mu_0 (P_{0, t} f) + \int_0^t \mu_s \left(  \left(
   \partial_s + \Delta_{G_{\eta_s}} - \nabla_{G_{\eta_s}} V \cdummy \nabla
   \right) P_{s, t} f \right) \mathd s + M^{P f}_t = \mu_0 (P_{0, t} f) + M^{P
   f}_t, \]
where $M^{P f}$ is a continuous martingale in the filtration
$(\mathcal{G}_t)_{t \geq 0}$, with quadratic variation
\[ \langle M^{P f} \rangle_s = \frac{2}{N} \int_0^s \mu_r (| \nabla_{G_r}
   P_{r, t} f |^2_{G_r}) \mathd r. \]
In particular, we obtain from It{\^o}'s formula
\begin{align} \nonumber
  & \mathbb{E} [(\mu_t (f)^m - \mu_0 (P_{0, t} f)^m) \mid \mathcal{G}_0]\\  \nonumber
  & =\mathbb{E} \left[ \left. \int_0^t m \mu_s (P_{s, t} f)^{m - 1} \mathd
  M^{P f}_s + \frac{m (m - 1)}{N} \int_0^t \mu_s (P_{s, t} f)^{m - 2} \mu_s (|
  \nabla_{G_s} P_{s, t} f |^2_{G_s}) \mathd s \,\right|\, \mathcal{G}_0 \right]\\ \label{eq:weak-uniqueness-pr1}
  & = \frac{m (m - 1)}{N} \int_0^t \mathbb{E} [\mu_s (P_{s, t} f)^{m - 2}
  \mu_s (| \nabla_{G_s} P_{s, t} f |^2_{G_s}) \mid \mathcal{G}_0] \mathd s.
\end{align}
Observe that the right hand side is $(m - 1)$-linear in $\mu_s$, which allows
an inductive proof of uniqueness: For $m = 1$, Equation~\eqref{eq:weak-uniqueness-pr1} yields
\[ \mathbb{E} [\mu_t (f) \mid \mathcal{G}_0] =\mathbb{E} [\mu_0 (P_{0, t} f)
   \mid \mathcal{G}_0], \qquad t \geq 0. \]
Assume now by induction that for all $t \geq 0$ and all
$\mathcal{G}_0$-measurable random $f_1, \ldots, f_{m - 1} \in C^2$ the
conditional expectation $\mathbb{E} [\mu_t (f_1) \cdots \mu_t (f_{m - 1})
\mid\mathcal{G}_0]$ is determined by the initial distribution of $\mu$. Then \eqref{eq:weak-uniqueness-pr1} shows that also $\mathbb{E} [\mu_t (f)^m
\mid\mathcal{G}_0]$ is uniquely determined. By
polarization, also $\mathbb{E} [\mu_t (f_1) \cdots \mu_t (f_m)
\mid\mathcal{G}_0]$ is uniquely determined for all $t \geq 0$, $m \in
\mathbb{N}$, and $\mathcal{G}_0$-measurable random $f_1, \ldots, f_m \in C^2$, which concludes the induction step.

By the tower property together with a monotone class argument, we then get
uniqueness of $\mathbb{E} [\Phi (\mu_t, \eta_t)]$ for all bounded and
measurable $\Phi : \mathcal{P} (E) \times \mathbb{R}^e \rightarrow
\mathbb{R}$. Now we perform another induction over the set of times $t_1,
\ldots, t_n$ to show that
\[ \mathbb{E} [\Phi_1 (\mu_{t_1}, \eta_{t_1}) \cdots \Phi_n (\mu_{t_n},
   \eta_{t_n})] \]
is uniquely determined by the initial distribution. We have just shown the
case $n = 1$. For general $n$, we condition on $\mathcal{F}_{t_{n - 1}}$ and
apply the previous argument to show that $\mathbb{E} [\Phi (\mu_{t_n},
\eta_{t_n}) \mid\mathcal{F}_{t_{n - 1}}]$ is uniquely determined by $ (\mu_{t_{n
- 1}}, \eta_{t_{n - 1}})$, i.e.
\[ \mathbb{E} [\Phi_1 (\mu_{t_1}, \eta_{t_1}) \cdots \Phi_{n - 1} (\mu_{t_{n -
   1}}, \eta_{t_{n - 1}}) \mathbb{E} [\Phi_n (\mu_{t_n}, \eta_{t_n})
   \mid\mathcal{F}_{t_{n - 1}}]] =\mathbb{E} [\Phi_1 (\mu_{t_1}, \eta_{t_1})
   \cdots \tilde{\Phi}_{n - 1} (\mu_{t_{n - 1}}, \eta_{t_{n - 1}})], \]
so the claim for $n$ follows from that for $n-1$. We conclude by another monotone class argument
to get the uniqueness of finite-dimensional distributions.
\end{proof}

In the proof we used the following presumably classical lemma, whose proof is in~\ref{app:aux-lemma}.

\begin{lemma}\label{lem:martingale-filtration}
  Let $(\mathcal{F}_t)_{t \geq 0}$ be a filtration and let $M, B$ both be
  martingales in $\mathcal{F}$, such that $B$ is a $d$-dimensional Brownian
  motion and $\langle M, B^i \rangle \equiv 0$ for $i \in \{ 1, \ldots, d \}$.
  Then $M$ is a martingale in the larger filtration $\mathcal{G}_t
  =\mathcal{F}_t \vee \mathcal{F}^B_{\infty}$, $t \ge 0$, where $\mathcal{F}^B_{\infty} =
  \sigma (B_s : s \geq 0)$.
\end{lemma}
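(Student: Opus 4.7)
The plan is to prove the martingale property $\mathbb{E}[M_t \mid \mathcal{G}_s] = M_s$ by testing against a sufficiently rich family of bounded random variables generating $\mathcal{G}_s$. First I would reduce by a monotone class argument: since $B$ is $\mathcal{F}$-adapted, we have $\mathcal{F}^B_s \subset \mathcal{F}_s$, so that $\mathcal{G}_s = \mathcal{F}_s \vee \sigma(B_u - B_s : u \geq s)$. Hence, using the decomposition of an $\mathcal{F}^B_\infty$-measurable random variable into a past-of-$s$ factor (absorbed into $\mathcal{F}_s$) and a future-of-$s$ factor depending on increments, it suffices to show that
\begin{equation*}
    \mathbb{E}[(M_t - M_s) Y Z] = 0
\end{equation*}
for every bounded $\mathcal{F}_s$-measurable $Y$ and every bounded $\sigma(B_u - B_s : u \geq s)$-measurable $Z$.

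Next I would invoke the density of stochastic exponentials: it is enough to take $Z$ of the form
\begin{equation*}
    Z = \mathcal{E}_T := \exp\Big(\int_s^T \phi_u \cdot \mathd B_u - \tfrac12 \int_s^T |\phi_u|^2 \mathd u\Big),
\end{equation*}
for bounded deterministic $\phi$ supported in $[s,T]$ with $T \geq t$, since such exponentials are total in $L^2(\sigma(B_u - B_s : s \leq u \leq T))$ and any $\sigma(B_u - B_s : u \geq s)$-measurable bounded random variable can be approximated by such finite-$T$ truncations. For each such $\phi$, the process $\mathcal{E}_u$ is a bounded $L^p$-$\mathcal F$-martingale on $[s,T]$ by Novikov, and $\mathcal{E}_s = 1$. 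Since $Z$ is $\mathcal{F}_T$-measurable with $\mathbb{E}[Z \mid \mathcal{F}_u] = \mathcal{E}_u$, taking conditional expectations gives $\mathbb{E}[M_t Y Z] = \mathbb{E}[Y M_t \mathcal{E}_t]$ and $\mathbb{E}[M_s Y Z] = \mathbb{E}[Y M_s \mathcal{E}_s] = \mathbb{E}[Y M_s]$, so the task reduces to proving $\mathbb{E}[Y M_t \mathcal{E}_t] = \mathbb{E}[Y M_s]$, i.e.\ that $M\mathcal{E}$ is an $\mathcal{F}$-martingale between $s$ and $t$.

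The key computation is Itô's product formula: since $\mathd \mathcal{E}_u = \mathcal{E}_u \phi_u \cdot \mathd B_u$, we have
\begin{equation*}
    \mathd (M_u \mathcal{E}_u) = \mathcal{E}_u \mathd M_u + M_u \mathcal{E}_u \phi_u \cdot \mathd B_u + \mathcal{E}_u \phi_u \cdot \mathd \langle M, B\rangle_u,
\end{equation*}
and the last term vanishes by hypothesis. Thus $M\mathcal{E}$ is a local martingale in $\mathcal{F}$. To upgrade it to a true martingale I would localize $M$ by a sequence of stopping times $\tau_n \uparrow \infty$ making $M^{\tau_n}$ bounded; since $\mathcal{E}$ is a bounded $L^p$-martingale, $M^{\tau_n}\mathcal{E}$ is then a true martingale, and passing to the limit by uniform integrability of $M_t^{\tau_n}\mathcal{E}_t$ (using $L^2$-bounds on $\mathcal{E}_t$ and the fact that $M_t$ is integrable, combined with dominated convergence once we replace $M$ by its truncation and use that a martingale in $\mathcal F$ is $\mathcal F$-uniformly integrable on $[0,t]$) yields the desired identity. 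The main obstacle is exactly this integrability/localization step when $M$ is merely a martingale and not assumed bounded or in $L^p$; the standard fix is either to stop $M$ first and then pass to the limit, or to note that it suffices to verify the martingale identity on $[0,t]$ where $M$ is uniformly integrable in $\mathcal{F}$. Once this is in place, combining with the density and monotone class arguments concludes the proof.
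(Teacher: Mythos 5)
Your proof is correct in substance but follows a genuinely different route from the paper. The paper reduces to test functions $F_s\Phi(B)$ with $F_s$ bounded $\mathcal F_s$-measurable and $\Phi$ a bounded measurable functional of the whole path of $B$, then invokes the martingale representation theorem to write $\Phi(B)-\mathbb E[\Phi(B)]=\int_0^\infty H_r\cdot\mathd B_r$, splits the stochastic integral at time $s$, and observes that both pieces give zero: the piece up to $s$ by conditioning on $\mathcal F_s$, and the piece after $s$ because $(M_{\cdot\wedge t}-M_s)\int_s^\cdot H_r\cdot\mathd B_r$ is already a local martingale once $\langle M,B\rangle\equiv 0$ kills the bracket term. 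You instead factor $\mathcal G_s=\mathcal F_s\vee\sigma(B_u-B_s:u\ge s)$, test against stochastic exponentials $\mathcal E$ of deterministic integrands (a total family), and use Itô's product rule to show $M\mathcal E$ is a local $\mathcal F$-martingale, again because the $\mathcal E\phi\cdot\mathd\langle M,B\rangle$ term vanishes. The two proofs hinge on the same algebraic fact — vanishing covariation makes the product a local martingale — but you package the ``future of $B$'' via exponential martingales rather than martingale representation, which avoids any appeal to a representation theorem and is perhaps more transparent about where orthogonality enters.

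One caveat: you correctly flag that upgrading from local to true martingale and passing to limits needs an integrability hypothesis on $M$ beyond plain martingality, since $\mathcal E_t$ is unbounded; you would want $M$ locally square-integrable (or in $L^p$) for the Hölder/localization step to close, and continuity of $M$ for the bracket $\langle M,B\rangle$ and the Itô product formula to apply. The paper's proof has the same implicit requirements (it multiplies $M_t-M_s$ by an $L^2$ stochastic integral without comment, and uses $\langle M,B\rangle$), and these are harmless in the application since the relevant $M^f$ there is a continuous $L^2$-bounded martingale. So this is a shared looseness rather than a defect specific to your argument.
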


\section{Discretization}\label{sec:discretization}

In this section we present a finite volume discretization of the surface Dean--Kawasaki equation given by \eqref{eq:sDK-nu-potential} in two dimensions with possible external potential, for the formal density of the empirical measure with respect to the surface area element. Note that, in order to simplify the notation, we will no longer use superscripts to indicate the reference measure, as in the previous sections, and simply write $\rho$. 

The discretization is constructed on the domain $\mathcal{D}$ using the
representation of the surface Dean--Kawasaki equation in local coordinates and Euclidean differential operators:
\begin{align}
    \mathd\rho =\frac{1}{\sqrt{|G|}}\nabla \cdot (\sqrt{|G|}G^{-1}\nabla \rho)\mathd t \ + \ & \frac{1}{\sqrt{|G|}}\nabla \cdot ( \sqrt{|G|} \rho G^{-1}\nabla V)\mathd t \nonumber \\ +\ & \sqrt{\frac{2}{N}} \frac{1}{\sqrt{|G|}} \nabla \cdot (\sqrt{\rho} |G|^{1/4}G^{-1/2}\mathd W) .
    \label{eq:instrinic_form}
\end{align}

We assume that $\mathcal{D}$ is rectangular of dimension $L_x \times L_y$ with periodic boundary conditions.
We introduce a uniform finite volume mesh of size $I\times J$  with mesh spacing $\Delta x = \frac{L_x}{I}$ and $\Delta y = \frac{L_y}{J}$.  We then let
\begin{align*}
x_{i} =&( i+\frac{1}{2}) \Delta x, \;\;\;\;i= 0,...,I-1, \\
y_{j} =&( j+\frac{1}{2}) \Delta y, \;\;\;\;j= 0,...,J-1,
\end{align*}
and we define cell  $C_{i,j}$ to be
\[
C_{i,j} = [x_{i-\half},x_{i+\half}] \times  [y_{j-\half},y_{j+\half}],
\]
where $x_{i-\half} = x_i - \frac{\Delta x}{2}$, etc.

We approximate $\rho$ by its average value over each cell using
\[
\rho_{i,j} (t)
 \approx  \frac {\int_{C_{i,j}} \rho(x,t) \jacdat \mathd x }{\int_{C_{i,j}} \jacdat \mathd x},
\]
and we let $\rho_{i,j}^n \approx \rho_{i,j}(n \Delta t)$, where $\Delta t$ is the time step.

The presence of off-diagonal terms in $G^{-1}$ introduces tradeoffs in the construction of a discretization.  In particular, a standard finite volume discretization of the Laplace-Beltrami operator results in a semi-discrete system that fails to satisfy the fluctuation dissipation balance.  Here we construct a discretization that satisfies a discrete fluctuation dissipation balance.
We first define a discrete gradient by
\[
\nabla_h^- \rho_{i,j} := \Biggl( 
\begin{matrix}
    \frac{\rho_{i,j}-\rho_{i-1,j}}{\Delta x} \\[.2cm]
    \frac{\rho_{i,j}-\rho_{i,j-1}}{\Delta y}
\end{matrix}\Biggr).
\]
We next define the discrete divergence operator, $\nabla_h^+ \cdot$, acting on a discrete vector field, $(u_{i,j},v_{i,j})^T$, by taking the discrete adjoint of $\nabla_h^-$ to obtain
\[
\nabla_h^+ \cdot
\begin{pmatrix}
u_{i,j} \\
v_{i,j} 
\end{pmatrix}
 := 
\frac{u_{i+1,j}-u_{i,j}}{\Delta x} + \frac{v_{i,j+1}-v_{i,j}}{\Delta y} .
\]
Using these definitions, we obtain the semi-discrete system of stochastic ODEs
\begin{eqnarray}
    d\rho_{i,j} =& \!\!\!\frac{1}{\sqrt{|G|}_{i,j}}\nabla_h^+ \cdot (\sqrt{|G|}_{i,j}G_{i,j}^{-1}\nabla_h^- \rho_{i,j})\mathd t + \frac{1}{\sqrt{|G|}_{i,j}}\nabla_h^+ \cdot ( \sqrt{|G|}_{i,j} \rho_{i,j} G_{i,j}^{-1}\nabla_h^- V_{i,j}) \mathd t \nonumber \\ &+ \sqrt{\frac{2}{N}} \frac{1}{\sqrt{|G|}_{i,j}} \nabla_h^+ \cdot \left(\sqrt{\rho}_{i,j} |G|_{i,j}^{1/4}G_{i,j}^{-1/2} \frac{1}{\sqrt{ \Delta x \Delta y}} \begin{pmatrix}
        \mathd\mathrm{W}_{i,j}^x \\[.1cm]
        \mathd\mathrm{W}_{i,j}^y
    \end{pmatrix}\right),
    \label{eq:fv_semidisc}
\end{eqnarray}
where the $(\mathrm{W}_{i,j}^{x,y})_{i,j,x,y}$ are a collection of independent standard two-dimensional Brownian motions.  In this discretization, all terms involving the metric tensor, $G$, and the potential, $V$, are evaluated at the cell center using analytic formulae.  We note that $G_{i,j}^{-1/2}$ is not uniquely defined.  Any choice of $G_{i,j}^{-1/2}$ that satisfies $G_{i,j}^{-1/2} \, (G_{i,j}^{-1/2})^T = G_{i,j}^{-1}$ will yield the same behavior.

The semidiscrete system Eq. (\ref{eq:fv_semidisc}) with $V\equiv0$ can be written in matrix form as
\begin{eqnarray}\label{num_rho}
    \mathd\boldsymbol{\rho}&=& \mathbf{J}^{-1} \boldsymbol{\nabla}_h^+ (\mathbf{J} 
    \mathbf{G}^{-1}  \boldsymbol{\nabla}_h^-\boldsymbol{\rho})\, \mathd t \nonumber \\
    && + \sqrt{\frac{2}{N \Delta x \Delta y}} \, \mathbf{J}^{-1}
    \boldsymbol{\nabla}_h^+  \left( \mathrm{Diag}(\boldsymbol{\sqrt{\rho}}) \,\mathbf{J}^{1/2} \mathbf{G}^{-1/2}
    \begin{pmatrix}
        \mathd\mathbf{W}^x \\[.1cm]
        \mathd\mathbf{W}^y
    \end{pmatrix}\right),
    \label{eq:fv_matdisc}
\end{eqnarray}
where $\boldsymbol{\rho}=(\rho_{i,j})$ and $\mathbf{W}^x, \mathbf W^y$ are independent $IJ$-dimensional Brownian motions, $\mathbf{J}$ is an $IJ \times IJ$ diagonal matrix of $\jacdat$, and $\boldsymbol{\nabla}_h^+$ is $IJ \times 2IJ$ matrix that represents the discrete divergence and $\boldsymbol{\nabla}_h^-$ is the $2IJ \times IJ$ matrix that represents the discrete gradient.
$\mathrm{Diag}(\boldsymbol{\alpha})$ is a $2IJ \times 2IJ$ diagonal matrix formed from $2\times 2$ blocks of the form 
\[
\begin{pmatrix}
    \alpha_{i,j} & 0 \\ 0 & \alpha_{i,j}
\end{pmatrix} .
\]
Finally, $\mathbf{G}^{-1}$ and $\mathbf{G}^{-1/2}$ are block diagonal $2IJ \times 2IJ$ matrices where the blocks are the $2\times 2$ matrices $G_{i,j}^{-1}$ and $G_{i,j}^{-1/2}$, respectively.

To derive the fluctuation-dissipation relation, we now consider the system \eqref{num_rho} linearized around the time-stationary mean-field limit $\bar{\rho} = \frac{1}{\nu(D)}$, which can be written as
\begin{equation}
     \mathd\boldsymbol{Z}= \, \mathbf{J}^{-1} \mathbf{L}_h \boldsymbol{Z}\, \mathd t 
    +\sqrt{\frac{2 \bar{\rho}}{N \Delta x \Delta y}} \, \mathbf{J}^{-1}
    \mathbf{K}_h
    \begin{pmatrix}
        \mathd\mathbf{W}^x \\[.1cm]
        \mathd\mathbf{W}^y
    \end{pmatrix},
    \label{eq:fv_matdiscequil}
\end{equation}
where
\[
\mathbf{L}_h = \boldsymbol{\nabla}_h^+ (\mathbf{J} \mathbf{G}^{-1}  \boldsymbol{\nabla}_h^-)
\qquad \mathrm{and} \qquad  \mathbf{K}_h = \boldsymbol{\nabla}_h^+  \,(\mathbf{J}^{1/2}\, \mathbf{G}^{-1/2}) \;\;.
\]
The stationary covariance of $\boldsymbol{Z}$, denoted by $C_{\boldsymbol{Z}}$, is then given by
\begin{equation}\label{eq_Z}
\mathbf{J}^{-1} \mathbf{L}_h C_{\boldsymbol{Z}} + C_{\boldsymbol{Z}} \mathbf{L}_h^* \mathbf{J}^{-1} = 
- \frac{2 \bar{\rho}}{N \Delta x \Delta y} \, \mathbf{J}^{-1}
    \mathbf{K}_h \mathbf{K}_h^* \mathbf{J}^{-1}.
\end{equation}

Since by construction, $(\boldsymbol{\nabla}_h^+)^*=-\boldsymbol{\nabla}_h^-$  and the above operators are represented by matrices, we may use associativity of matrix multiplication to obtain, 
\begin{align}
    \boldsymbol{K}_h   \boldsymbol{K}_h^* &=\boldsymbol{\nabla}_h^+( \,\mathbf{J}^{1/2}\, \mathbf{G}^{-1/2} )(\mathbf{G}^{-1/2})^T \,\mathbf{J}^{1/2}\,  (\boldsymbol{\nabla}_h^+ )^*\\
    &= -\boldsymbol{\nabla}_h^+( \,\mathbf{J} \,  \mathbf{G}^{-1} \boldsymbol{\nabla}_h^-) = -\boldsymbol{L}_h.
\end{align}
Hence, the equation \eqref{eq_Z} becomes
\begin{equation*}
    \mathbf{J}^{-1} \mathbf{L}_h C_{\boldsymbol{Z}} + C_{\boldsymbol{Z}} \mathbf{L}_h^* \mathbf{J}^{-1} = 
 \frac{2 \bar{\rho}}{N \Delta x \Delta y} \, \mathbf{J}^{-1}
    \mathbf{L}_h  \mathbf{J}^{-1} \;\;\;,
\end{equation*}
which then gives

\begin{equation}
    C_{\boldsymbol{Z}} = \frac{\bar{\rho}}{N \Delta x \Delta y} \, \mathbf{J}^{-1}.
\label{eq:disc_FDR}
\end{equation}
Equation~(\ref{eq:disc_FDR}) shows the fluctuation dissipation relation for the spatially discrete system in coordinate space.  This result is consistent with the fluctuation dissipation result \eqref{cFDR} discussed  for the linearized SPDE, which implies that the numerical method will generate the correct equilibrium statistics.

We now consider the temporal discretization of Equation~(\ref{eq:fv_semidisc}), assuming the metric tensor and the external potential do not depend on time.  For simplicity we will only consider an Euler--Maruyama discretization to avoid complications associated with higher order temporal discretization of stochastic differential equations with multiplicative noise.  This then gives the final discrete form
\begin{align}
    \rho_{i,j}^{n+1}  = \rho_{ij}^n \ + \ & \Delta t \Biggl[  \frac{1}{\sqrt{|G|}_{i,j}}\nabla_h^+ \cdot (\sqrt{|G|}_{i,j}G_{i,j}^{-1}\nabla_h^- \rho_{i,j}^n) + \frac{1}{\sqrt{|G|}_{i,j}}\nabla_h^+ \cdot ( \sqrt{|G|}_{i,j} \rho_{i,j}^n G_{i,j}^{-1}\nabla_h^- V_{i,j}) \nonumber \\
    \ +\ & \sqrt{\frac{2}{N}} \frac{1}{\sqrt{|G|}_{i,j}} \nabla_h^+ \cdot \left(\sqrt{\rho}_{i,j}^n |G|_{i,j}^{1/4}G_{i,j}^{-1/2} \frac{1}{\sqrt{ \Delta t \Delta x \Delta y}} \begin{pmatrix}
        Z_{i,j}^x \\[.1cm]
        Z_{i,j}^y
    \end{pmatrix} \right) \Biggr],
    \label{eq:fv_disc}
\end{align}
where the $Z_{i,j}^{x,y}$ are centered normally distributed random numbers with
\[
\mathbb E[ Z_{i,j}^{\alpha}, Z_{i',j'}^{\alpha'}] = \delta_{i,i'} \; \delta_{j,j'} \; \delta_{\alpha,\alpha'},
\]
for Kronecker deltas $\delta$.

\section{Numerical experiments}\label{sec:experiments}

In this section we present several computational examples demonstrating the behavior of the surface Dean--Kawasaki equation.

\subsection{Equilibrium test}

The first numerical example investigates the equilibrium statistical properties of the discretized system. 
For this case we consider the surface defined by
\[
H(x,y) = a \sin(x) \sin(y). 
\]
The explicit formulae for the metric terms are given in  \ref{AppendixA}.

We discretize the system on $[0,2\pi)^2$ with $a=3$ on a $32 \times 32$ grid
In Figure \ref{fig:surface_1}, we show the resulting surface $H(x,y)$ colored by the local surface density.
\begin{figure}[h]
    \centering
    \includegraphics[width=0.7\textwidth]{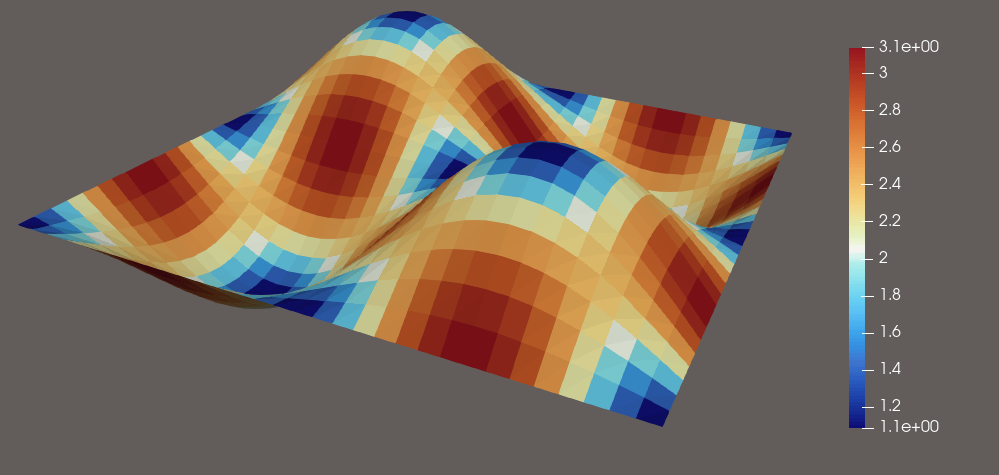}
    \caption{Image showing $H(x,y)$ colored by the local surface density $\sqrt{|G|}$.}
    \label{fig:surface_1}
\end{figure}
We
set the time step to $\Delta t = 1.506 \times 10^{-4}$, which is $1.5625\times 10^{-2}$ time the maximum stable time step for the discretization.  The small time step minimizes errors associated with the Euler--Maruyama scheme. 

We set the total number of particles to $N = 10240$ and initialize with constant value
\[
\rho = \frac{1}{\int_\mathcal{D} \jacdat \mathd x},
\]
which corresponds to the steady state solution of the system without noise.
Here the integral is computed by summing $\Delta x \Delta y \jacdat$ over the grid. 
The system is evolved for $10^5$ time steps to reach equilibrium. 
We then run for an additional $10^6$ steps, gathering statistics every 10 steps.  For comparison purposes, we also integrate the particle system given by \eqref{eq:BMonSurface} for the same length of time using the same time step, again with Euler--Maruyama.  For the particle integration, we initialize the system with 10 particles per cell so that both systems have the same number of particles.  We note that the particle initialization is not the same as the SPDE initialization; however, these differences have disappeared before we begin collecting statistics. 

We consider two different representations of the data, one in terms of $\rho$ and the other in terms of the number of particles in each cell, which we denote as $N_{i,j}$.  To map between these representations, we note that
\begin{equation}
N_{i,j} = N \, \rho_{i,j} \;  \jacdat \,\Delta x \Delta y
\label{eq:map_mu_n}.
\end{equation}
We note that while the equilibrium mean of $\rho$ is constant, the mean number of particles per cell reflects the local surface density given by $\sqrt{|G|}$; i.e., there are more particles in regions with higher local surface density. 

In Figure \ref{fig:mu_mean} we plot the mean values obtained from finite volume and particles simulations.  The mean values of $\rho$ on the grid are converging to the theoretical value 
\[\frac{1}{\int_\mathcal{D} \jacdat \mathd x} \;\;
\]

The data is plotted in terms of number of particles per cell in Figure \ref{fig:num_mean}, where in this case the theoretical mean number of particles per cell is given by 
\[
\frac{N \;  \jacdat \,\Delta x \Delta y }{ {\int_\mathcal{D} \jacdat \mathd x}} \;\; .
\]

The mean values of $N_{i,j}$ reflect the underlying structure of the surface as expected. More precisely, the average number of particles per cell is proportional to the area of the surface in that cell.  As seen in Figure \ref{fig:num_mean} both the finite volume method and the particle method recover the correct mean distribution.

\begin{figure}[h]
    \centering
    \includegraphics[width=0.287\textwidth]{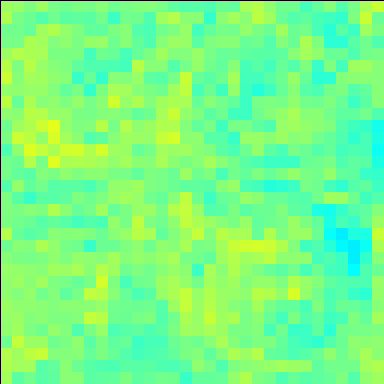}
    \includegraphics[width=0.287\textwidth]{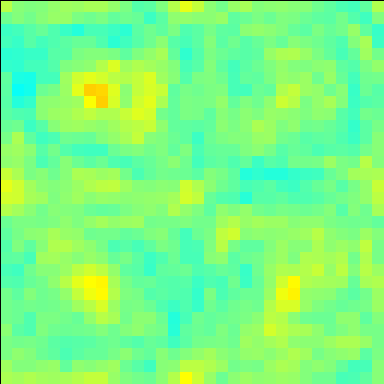}
    \includegraphics[width=0.287\textwidth]{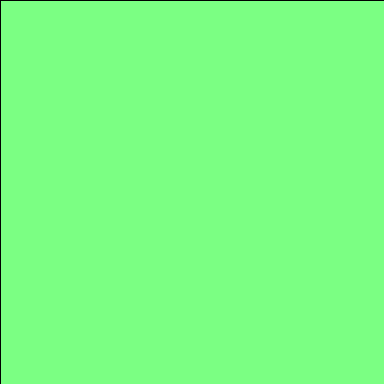}
    \includegraphics[width=0.105\textwidth]{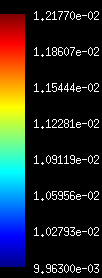}
    \caption{Mean of $\rho$.  The left panel is the finite volume scheme, the middle panel is a Brownian particles simulation and the right panel is the theoretical result.  Note that we have set the range to correspond to the theoretical mean $\pm 10\%$.
    }
    \label{fig:mu_mean}
\end{figure}

\begin{figure}[h]
    \centering
    \includegraphics[width=0.287\textwidth]{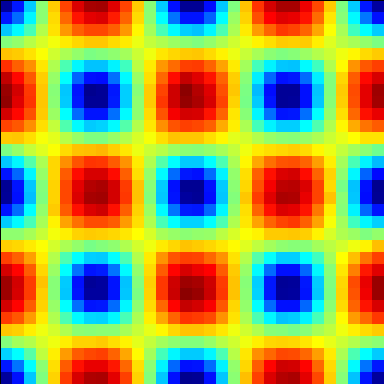}
    \includegraphics[width=0.287\textwidth]{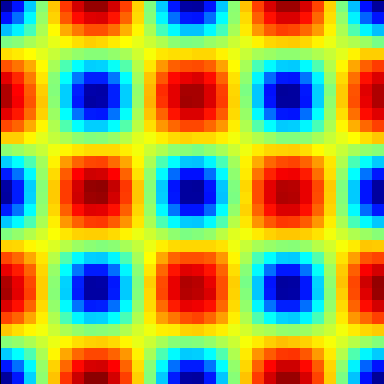}
    \includegraphics[width=0.287\textwidth]{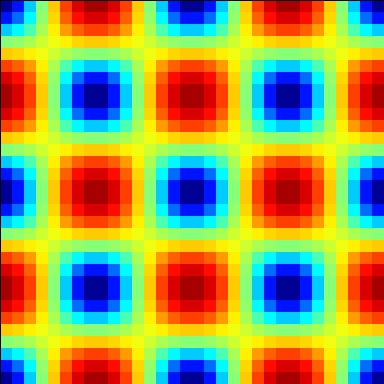}
    \includegraphics[width=0.105\textwidth]{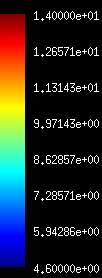}
    \caption{Mean number of particles in each cell.  The left panel is the finite volume scheme, the middle panel is a Brownian particles simulation and the right panel is the theoretical result.
    }
    \label{fig:num_mean}
\end{figure}

Next we consider the variance of the equilibrium distributions computed from the simulations.  Although the mean
value of $\rho$ is constant over the domain, the variance of $\rho$ is not.  In particular, as discussed above in
Equation~(\ref{eq:disc_FDR}),
\[
\mathrm{Var}(\rho_{i,j}) = \frac{1}{N A_S \jacdat_{i,j} \,\Delta x \Delta y }
\]
where $A_S = {\int_{\mathcal{D}} \jacdat \mathd x}$ is the surface area. This reflects the intuition that $\rho$  has higher variability in cells where the local surface area is smaller.  The variances of $\rho$ obtained from the simulations are show in Figure \ref{fig:mu_var}.  Again, both the finite volume and particle data are in good agreement with the theoretical values.

\begin{figure}[h]
    \centering
    \includegraphics[width=0.287\textwidth]{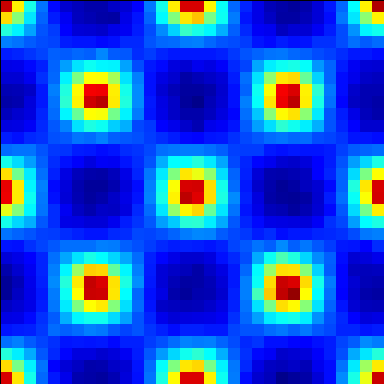}
    \includegraphics[width=0.287\textwidth]{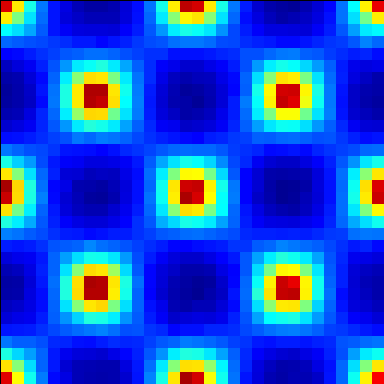}
    \includegraphics[width=0.287\textwidth]{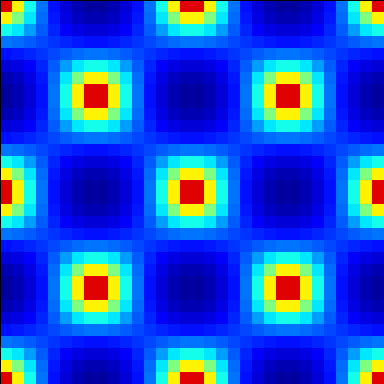}
    \includegraphics[width=0.105\textwidth]{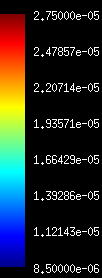}
    \caption{Variance of $\rho$.  The left panel is the finite volume scheme, the middle panel is a Brownian particles simulation and the right panel is the theoretical result.
    }
    \label{fig:mu_var}
\end{figure}

The variance of $N_{i,j}$ is shown in Figure \ref{fig:num_var}.  The expected variance of $N_{i,j}$ is given by
\[
\mathrm{Var}(N_{i,j}) = \frac{N  \jacdat_{i,j} \,\Delta x \Delta y } {A_S}.
\]
When expressed in terms of number density there is more variability in regions with higher local surface area, which is the opposite of what was observed for the variance of $\rho$.  Again, the figures show excellent agreement between the two  simulations and the theoretical result. We also note that in the simulations $\mathrm{Var}(N_{i,j}) \approx \overline{N}_{i,j}$.  This reflects the underlying Poisson character of the particle equilibrium distribution.  In particular, we expect the distribution of particles within each cell to be Poisson distributed with a mean and variance proportional to the local surface area in that cell.

\begin{figure}[h]
    \centering
    \includegraphics[width=0.287\textwidth]{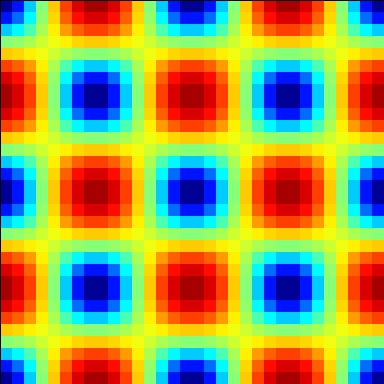}
    \includegraphics[width=0.287\textwidth]{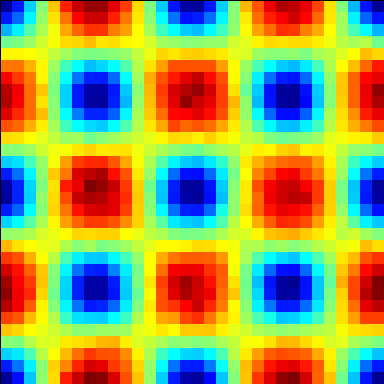}
    \includegraphics[width=0.287\textwidth]{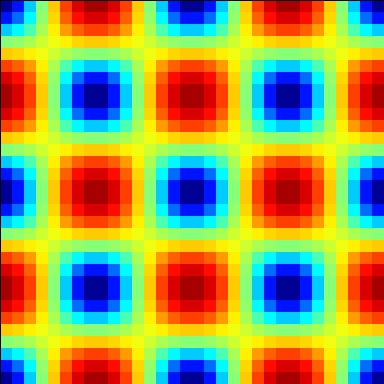}
    \includegraphics[width=0.105\textwidth]{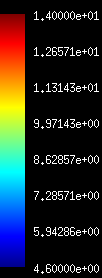}
    \caption{The variance of the number of particles in each cell.  The left panel is the finite volume scheme, the middle panel is a Brownian particles simulation and the right panel is the theoretical result.
    }
    \label{fig:num_var}
\end{figure}

\subsection{Transient simulations for a fixed surface}\label{sec:trans}

In this section we illustrate the transient behavior of the surface Dean--Kawasaki equation.
For the example presented here, the surface is given by
\[
H(x,y) = 4 \sin^2(x) \sin^2(y).
\]
The explicit formulae for the metric terms are given in \ref{AppendixB}.

We discretize the system on $[0,2\pi)\times[0,2\pi)$ on a $64 \times 64$ grid and again we
set the time step to $\Delta t = 1.506 \times 10^{-4}$ which is $1.5625\times 10^{-2}$ time the maximum stable time step for the discretization.  The small time step minimizes errors associated with the Euler--Maruyama scheme. 

We set the total number of particles to $N = 100000$ and initialize

\[
\rho_{i,j}= \Biggl\{ 
\begin{matrix*}[l]
   0.801421 & \mathrm{if} \;\;\; \sqrt{(x_{i,j}-\pi)^2+(y_{i,j}-\pi)^2} < 0.2 \pi
    \\[.1cm]
    0 & \mathrm{otherwise} 
\end{matrix*},
\]
which satisfies $\int_{\mathcal{D}} \rho(x) \mathd x = 1$.
The surface consists of four peaks of height 4 at 
\[(\pi\pm \pi/2, \pi\pm \pi/2)\]
and the initial condition is nonzero in a small circle between the four peaks.

The simulation was run for 50,000 time steps. Figure \ref{fig:mu_evolve} shows a time sequence of the solution plotted on the surface.
In the image, we have reduced height of the peaks so that $\rho$ is not obscured.  Also, we have set the range of $\rho$ to be the same for all the images.  The peak of $\rho$ is initially around 0.8 but decays rapidly over time.  Consequently the central red region of the images at early time are over saturated.
Eventually, $\rho$ would evolve to reach a constant value over the entire domain. However, at early times $\rho$ diffuses more slowly into steep regions where the local area is large.  This can be seen in the images where $\rho$ is 
rapidly diffusing through the valleys and more slowly diffusing up the steep parts of the surface.
\begin{figure}[h]
    \centering
    t = 0.47 \hspace{2in} t = 0.94 \\
    \vspace{.05in}
    \includegraphics[width=0.4\textwidth]{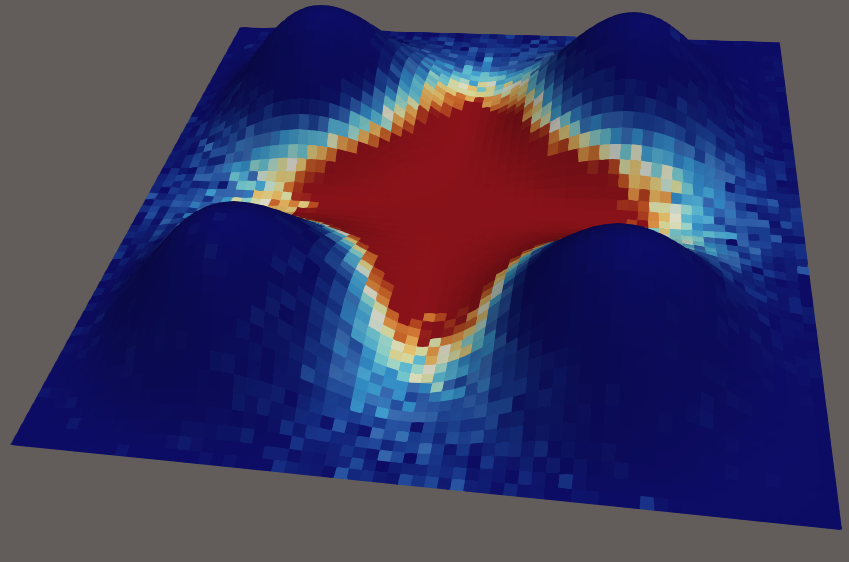}
    \includegraphics[width=0.4\textwidth]{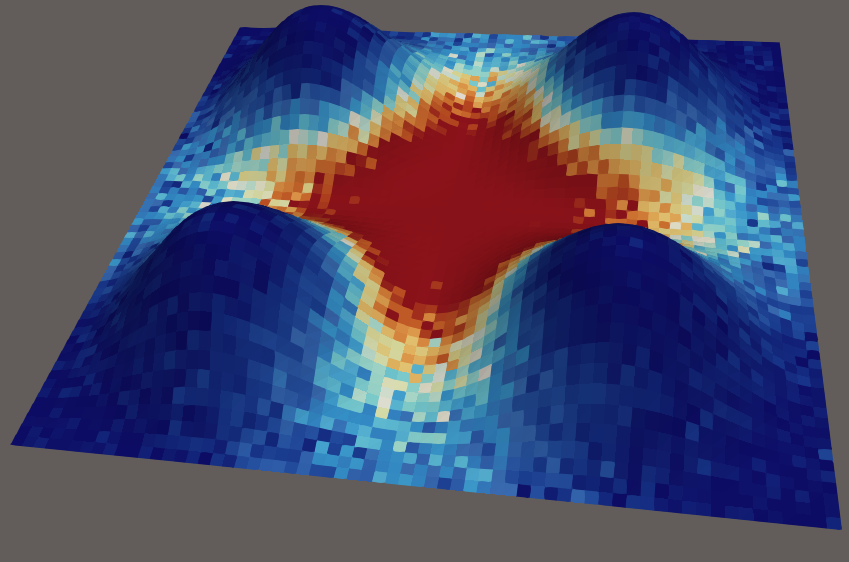} 
    
    t = 1.41 \hspace{2in} t = 1.88 \\
      \vspace{.05in}
 \includegraphics[width=0.4\textwidth]{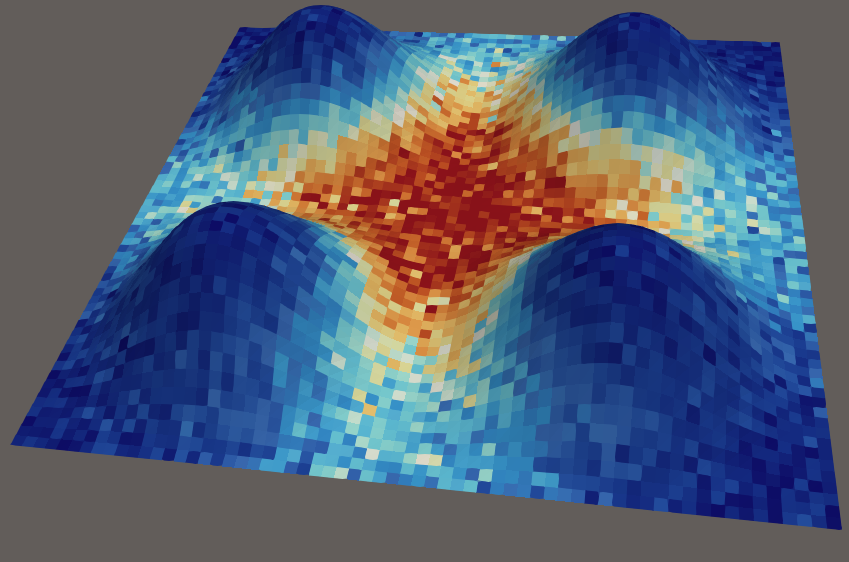}
 \includegraphics[width=0.4\textwidth]{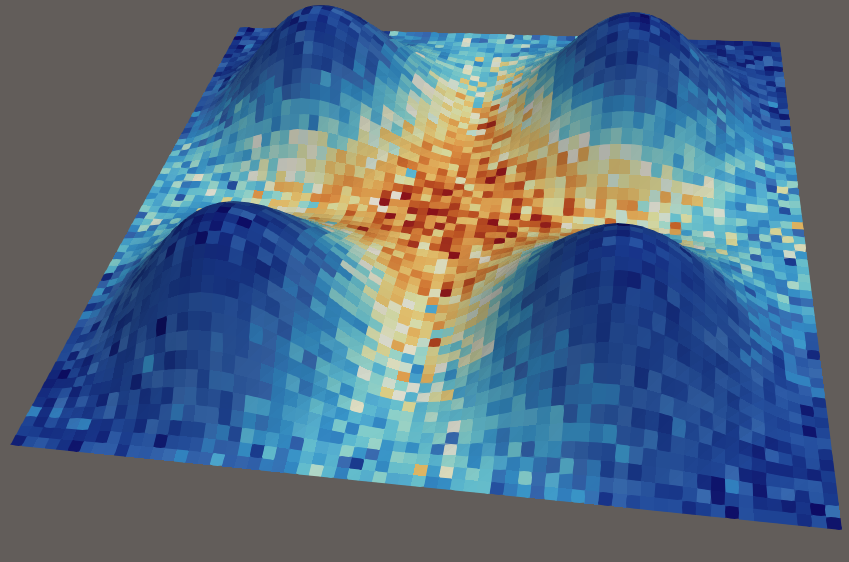}
    \caption{Time evolution of $\rho$. 
    The range of $\rho$ in the image is restricted to $[0,0.4]$ so the values are clipped particularly at early times.
    The peak values of $\rho$, left to right, are 0.166, 0.0786, 0.0542 and 0.0457. The height of the peaks on the surface have been scaled by 0.3 so that $\rho$ is not obscured.
    }
    \label{fig:mu_evolve}
\end{figure}

In Figure \ref{fig:num_evolve} we present the results of the same simulation expressed in terms of number of particles per cell, $N_{i,j}$. As we did before, we have set the range of $N_{i,j}$ to be the same for all images, which results in clipping of the values particularly at early times.  
The key observation in these images is that the diffusive process is slowed in regions where the surface is steep, and the number of particles in those regions is quite high. 
This reflects the notion that we expect more particles to be present in cells with higher local surface area.
Consequently, as the particles diffuse through the domain we observe high number of particles in the steep regions of the peaks and the center of the domain.
\begin{figure}[h]
    \centering
        t = 0.47 \hspace{2in} t = 0.94 \\
    \vspace{.05in}
    \includegraphics[width=0.4\textwidth]{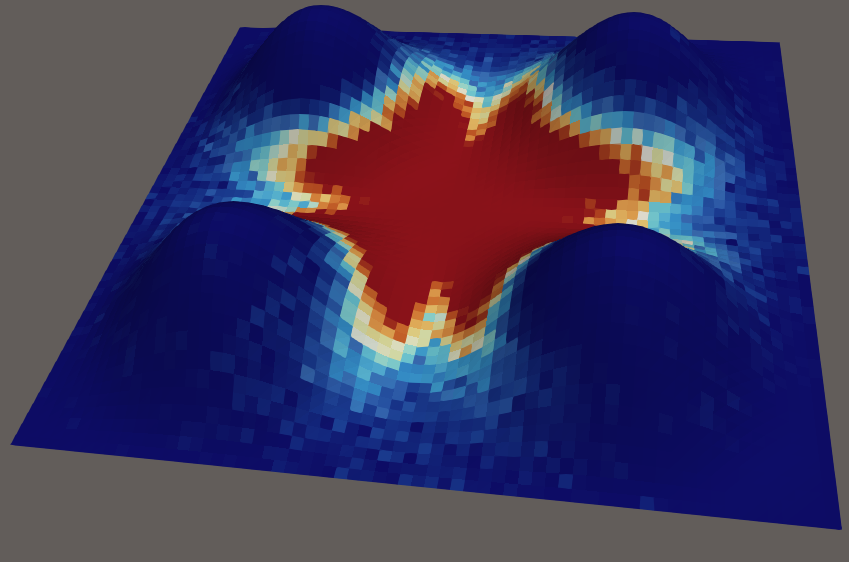}
    \includegraphics[width=0.4\textwidth]{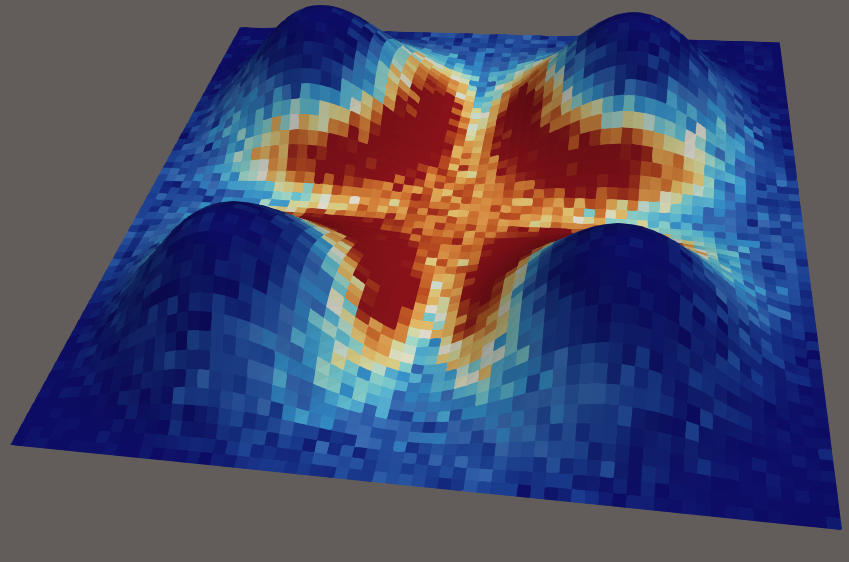}
        
    t = 1.41 \hspace{2in} t = 1.88 \\
      \vspace{.05in}
    \includegraphics[width=0.4\textwidth]{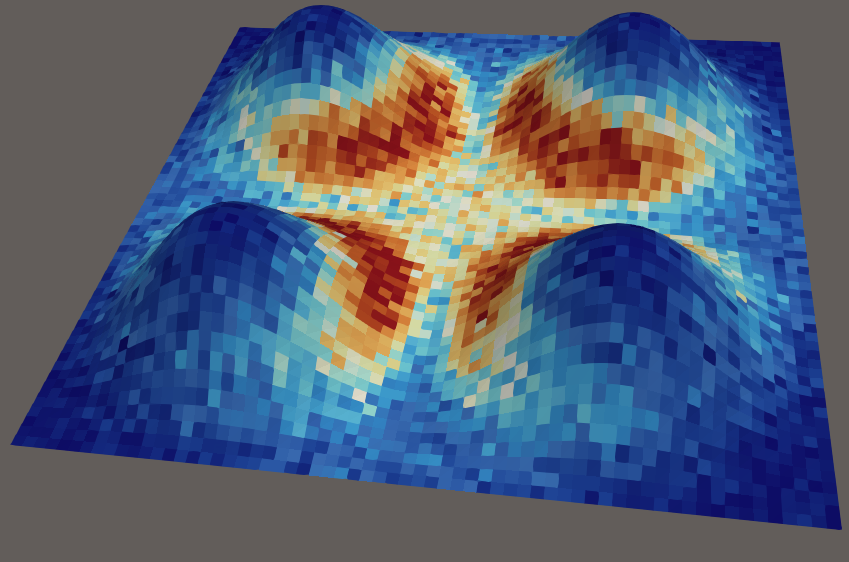}
    \includegraphics[width=0.4\textwidth]{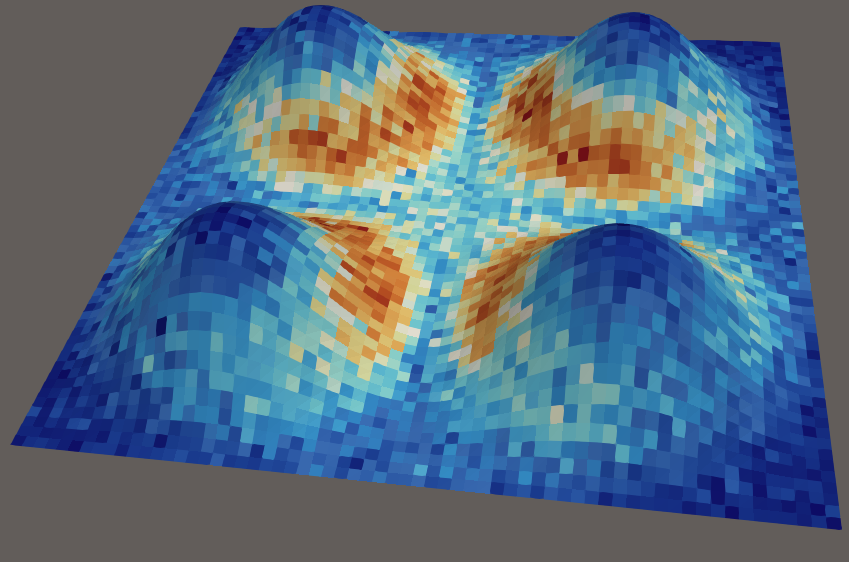}
    \caption{Time evolution of $N_{i,j}$. 
    The range of $N_{i,j}$ in the image is restricted to $[0,80]$ so the values are clipped particularly at early times.
    The peak values of $N_{i,j}$, left to right are 177, 135, 103 and 81. As in Figure \ref{fig:mu_evolve} height of the peaks on the surface have been scaled by 0.3 so that the solution is not obscured.
    }
    \label{fig:num_evolve}
\end{figure}

\subsection{Effect of an external potential}

In this section, we present results for same surface as was used in Section \ref{sec:trans}; however, now we introduce an external potential.  Specifically, we set
\[
V(x,y) = 5 \sin^2(x) \sin^2(y).
\]
Although the potential is of the same form as the surface, the effect of the potential on the dynamics is quite different.  From the perspective of the original particles dynamics, a steep region of the surface slows the particles motion relative to the $(x,y)$ coordinate system, whereas a steep region in the potential biases the particle motion to move downhill to a lower value of the potential.

This type of behavior is illustrated in Figure \ref{fig:mu_evolve_wp}.  The simulation parameters and the initial conditions are the same as in Section \ref{sec:trans}. For comparison purposes, we have used the same scaling here as in Figure \ref{fig:mu_evolve}.  As seen in the images, $\rho$ is more localized in the valleys of the surface, which also corresponds to the minimum of the potential surface.  The presence of the potential suppresses $\rho$ diffusing up the peaks and leads to higher peak  values of the solution.  The focusing effect of the potential also results in higher values of $\rho$ reaching the boundary and one begins to see the solution diffusing around the outer portion of the peaks.
\begin{figure}[h]
    \centering
        t = 0.47 \hspace{2in} t = 0.94 \\
    \vspace{.05in}
    \includegraphics[width=0.4\textwidth]{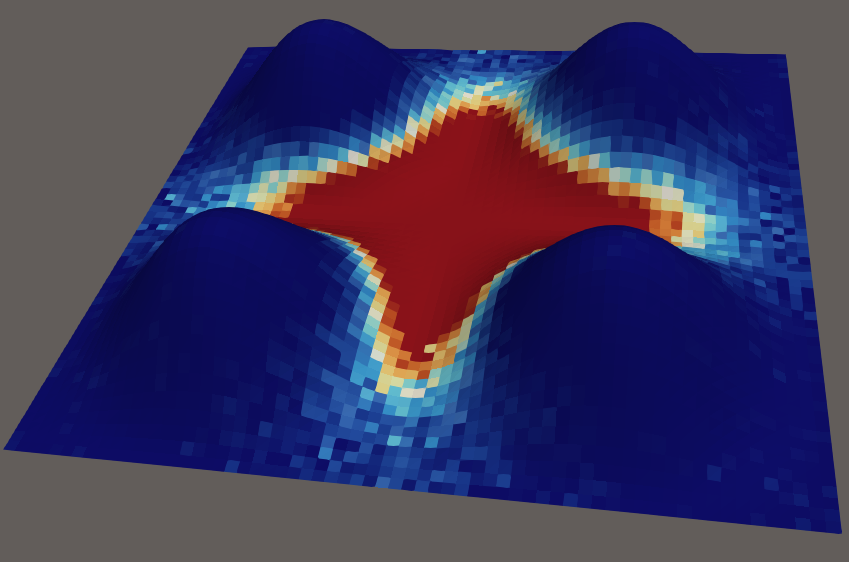}
    \includegraphics[width=0.4\textwidth]{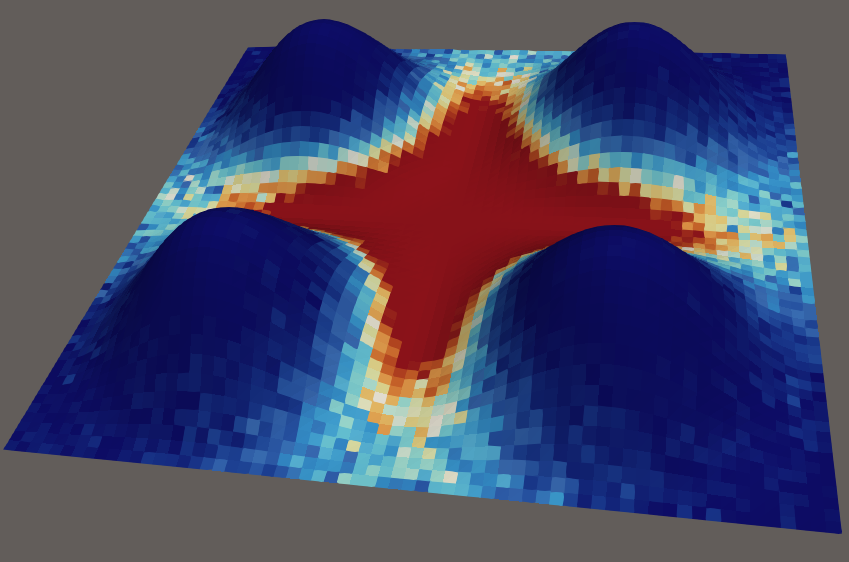}
        
    t = 1.41 \hspace{2in} t = 1.88 \\
      \vspace{.05in}
    \includegraphics[width=0.4\textwidth]{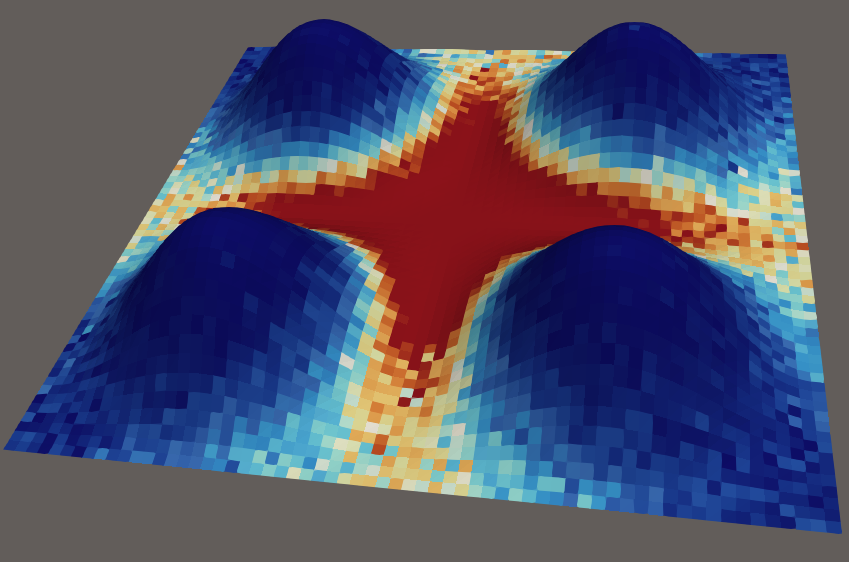}
    \includegraphics[width=0.4\textwidth]{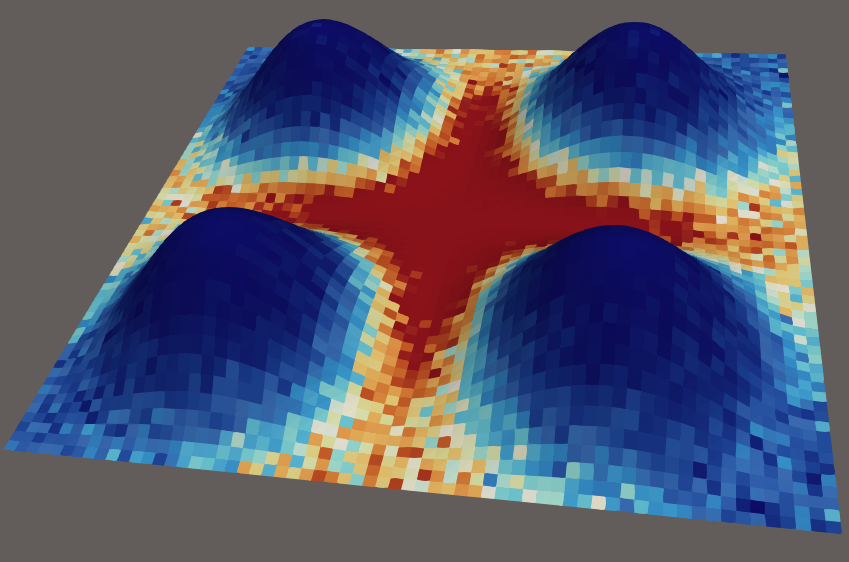}
    \caption{Time evolution of $\rho$ with an external potential. 
    The range of $\rho$ in the image is restricted to $[0,0.4]$ so the values are clipped particularly at early times.
    The peak values of $\rho$, left to right are 0.203, 0.125, 0.0957 and 0.0856. As before, the height of the peaks on the surface have been scaled by 0.3 so that $\rho$ is not obscured.
    }
    \label{fig:mu_evolve_wp}
\end{figure}

Figure \ref{fig:num_evolve_wp} shows the results of the simulation with an external potential expressed in terms of number of particles per cell.  As with $\rho$ the potential leads to a concentration of $N_{i,j}$ in the lower potential regions of the domain.  The most striking feature of images is that the potential suppresses the high concentration of particles on the steep inner part of the surface.  The two-pronged structure where the solution is diffusing down the channel between the peaks reflects the tradeoff between the tendency to have more particles in regions where the surface is steeper and the impact of the potential on biasing the particles toward the low potential region in the center of the channel.
\begin{figure}[h]
    \centering
        t = 0.47 \hspace{2in} t = 0.94 \\
    \vspace{.05in}
    \includegraphics[width=0.4\textwidth]{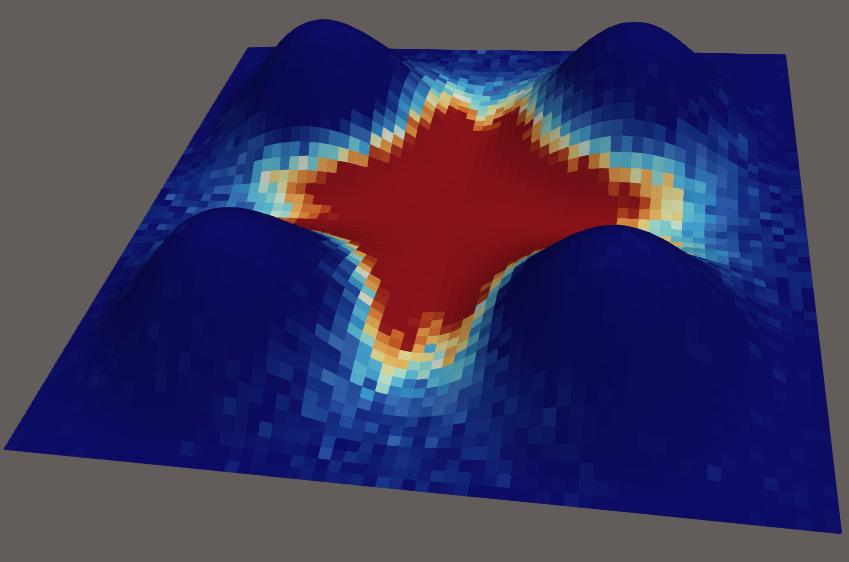}
    \includegraphics[width=0.4\textwidth]{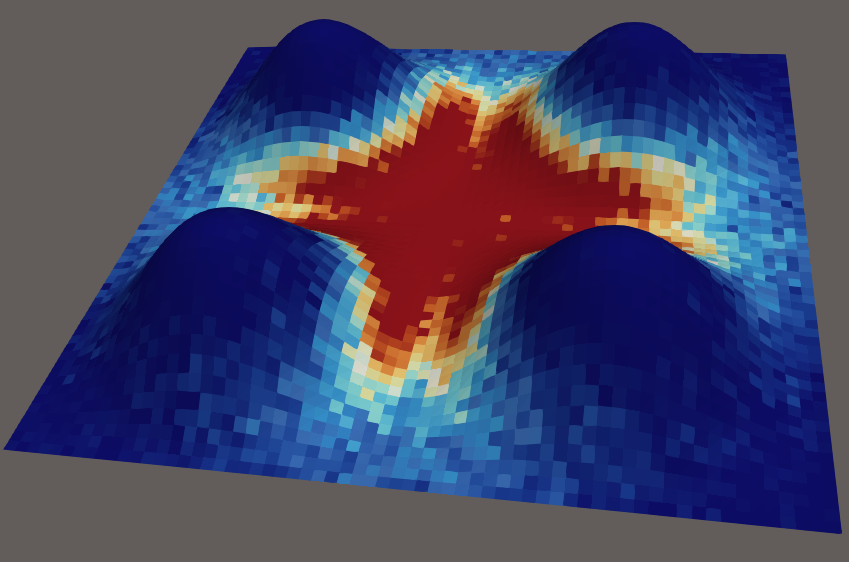}
        
    t = 1.41 \hspace{2in} t = 1.88 \\
      \vspace{.05in}
    \includegraphics[width=0.4\textwidth]{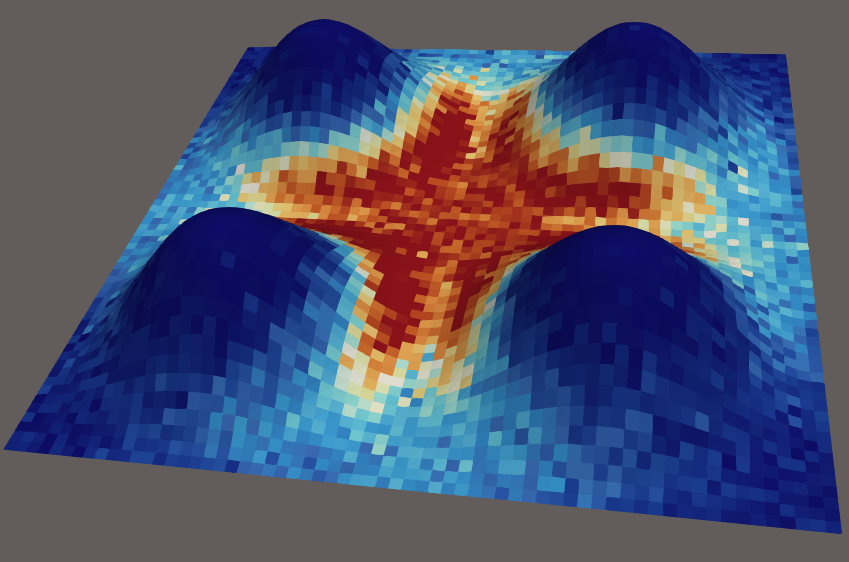}
    \includegraphics[width=0.4\textwidth]{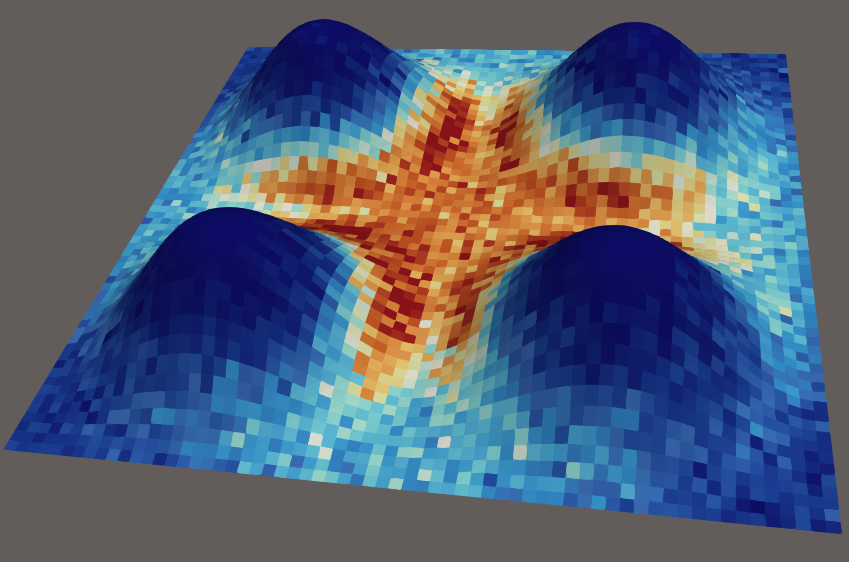}
    \caption{Time evolution of $N_{i,j}$ with an external potential. 
    The range of $N_{i,j}$ in the image is restricted to $[0,80]$ so the values are clipped particularly at early times.
    The peak values of $N_{i,j}$, left to right are 196, 144, 123 and 109. As before, the height of the peaks on the surface have been scaled by 0.3 so that the solution is not obscured.
    }
    \label{fig:num_evolve_wp}
\end{figure}

\subsection{Simulation of particles on a deforming surface}

In the last example we consider the dynamics of particles on a moving surface.
For this example, the time-dependent surface is given by
\[
H(x,y,t) = 4 \left(\alpha(t) \sin^2(x) \sin^2(y) + (1-\alpha(t)) \cos^2(x) \cos^2(y) \right)
\]
where
\[
\alpha(t) = \left\{ 
\begin{array}{ll}
\cos^2 (\pi t / 2 t_{sc}) & t \leq t_{sc} \\
0  & t > t_{sc}
\end{array} \right .
\]
For the case considered here, we set $t_{sc} = 0.3$ so that the initial surface changes faster than the particle diffusion.  Once the simulation has reached $t = t_{sc}$ the surface no longer changes.

We discretize the system on $[0,2 \pi]^2$ with $N=100,000$ particles on a $64 \times 64$ grid. We use a time step of $3.765 \times 10^{-5}$ to minimize errors associated with the Euler-Maruyama temporal discretization. We simulate the dynamics for 15,000 steps with initial condition
\[
\rho = \frac{1}{\int_\mathcal{D} \sqrt{|G{x,0}|} \; dx} \;\; ,
\]
which corresponds to the mean distribution corresponding to the surface at time $t=0$.

In Figure \ref{fig:phi_move_evolve} we show a time sequence of the evolution
of $\rho$ as the surface deforms. At $t = 0.15$ we see that the change of the surface has lead to a concentration of $\rho$ in regions where the surface was initially steeper.  This trend is intensified at $t=0.3$ as the new peaks emerge.  After $t=0.3$ the surface no longer evolves and we see the beginning of the relaxation to a new equilibrium with mean of
\[
\rho = \frac{1}{\int_\mathcal{D} \sqrt{|G{x,t_{sc}}|} \; dx} \;\; .
\]
In Figure \ref{fig:num_move_evolve} we present the results of the simulation in terms of $N_{i,j}$.  At $t=0.00$ we see large numbers of particles clustered on the sides of the bumps where the surface is steepest.  As the surface evolves the particles diffuse away from those regions of initially high concentration and begin to cluster in steep regions of the new bumps that emerge from the surface evolution.
\begin{figure}[h]
    \centering
    t = 0.00 \hspace{2in} t = 0.15 \\
    \vspace{.05in}
    \includegraphics[width=0.4\textwidth]{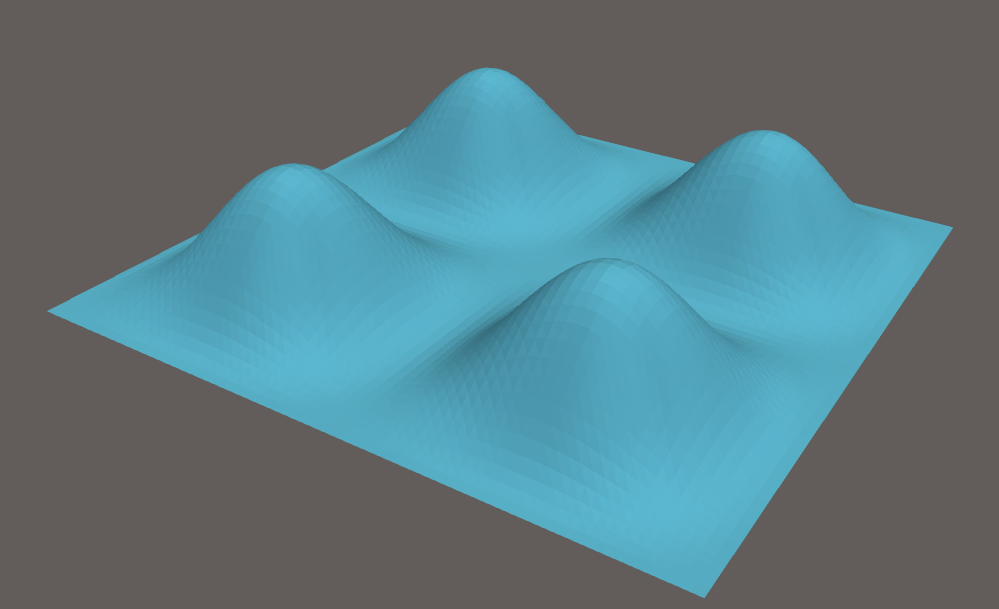}
    \includegraphics[width=0.4\textwidth]{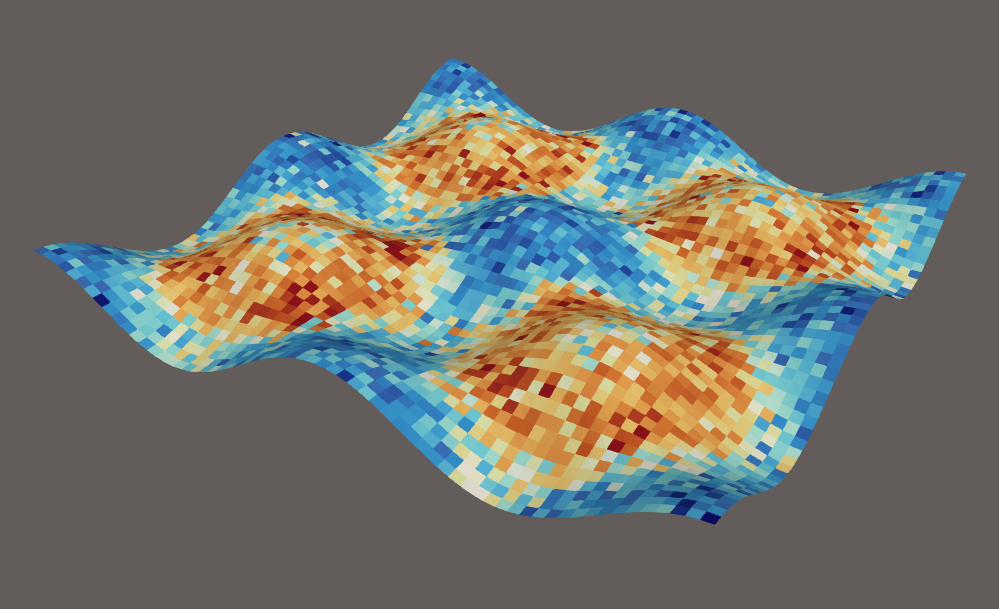} 
    
    t = 0.30 \hspace{2in} t = 0.56 \\
      \vspace{.05in}
 \includegraphics[width=0.4\textwidth]{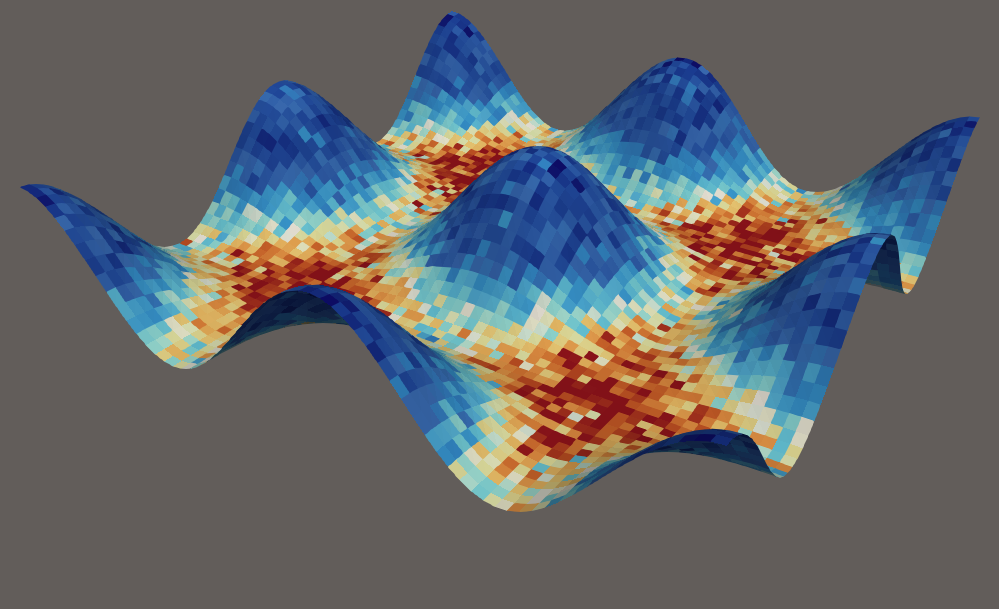}
 \includegraphics[width=0.4\textwidth]{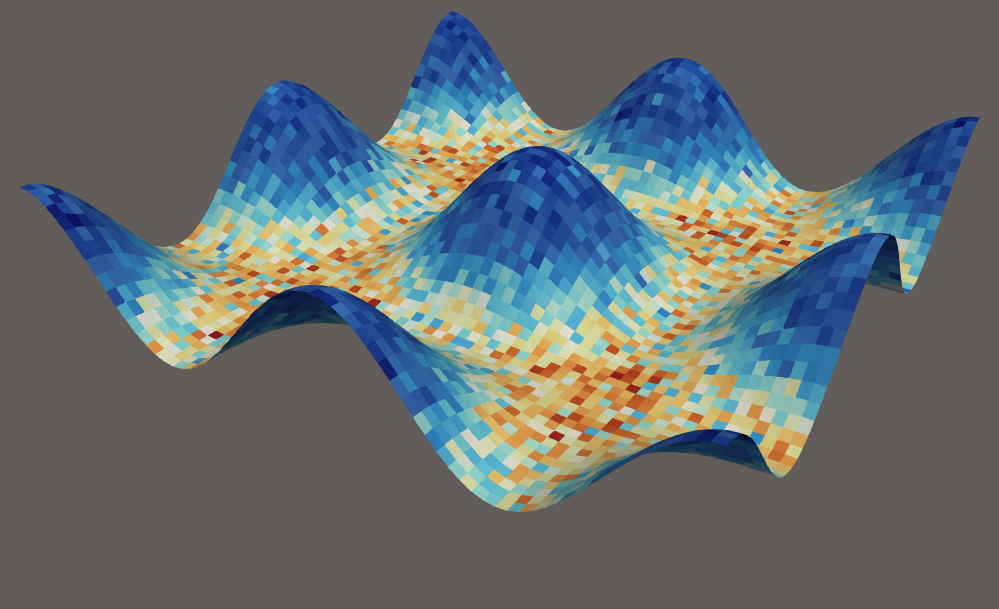}
    \caption{Time evolution of $\rho$. 
    The range of $\rho$ in the image is restricted to $[0,0.03]$. The height of the peaks on the surface have been scaled by 0.3 so that $\rho$ is not obscured.
    }
    \label{fig:phi_move_evolve}
\end{figure}

\begin{figure}[h]
    \centering
    t = 0.00 \hspace{2in} t = 0.15 \\
    \vspace{.05in}
    \includegraphics[width=0.4\textwidth]{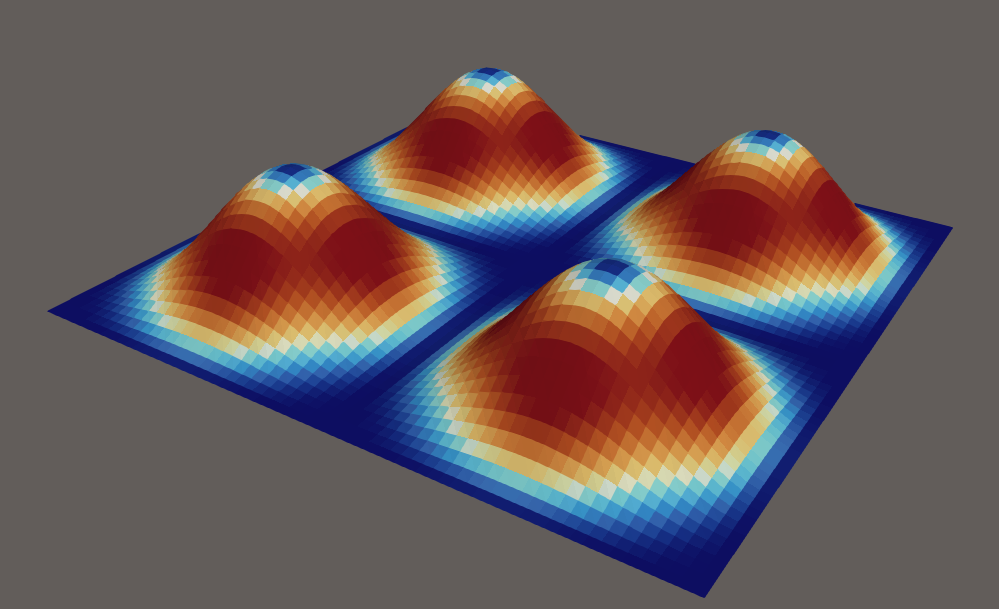}
    \includegraphics[width=0.4\textwidth]{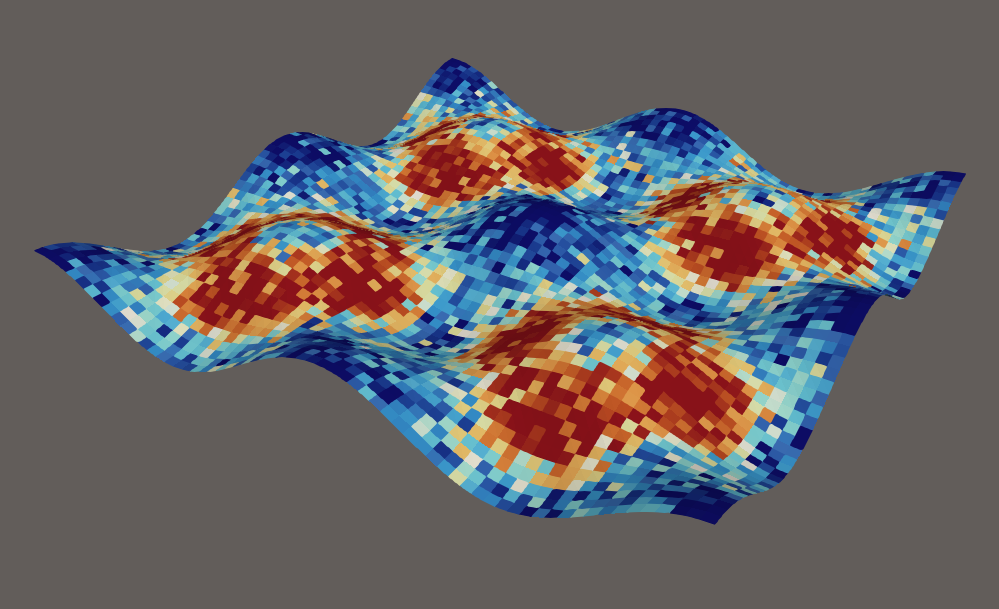} 
    
    t = 0.30 \hspace{2in} t = 0.56 \\
      \vspace{.05in}
 \includegraphics[width=0.4\textwidth]{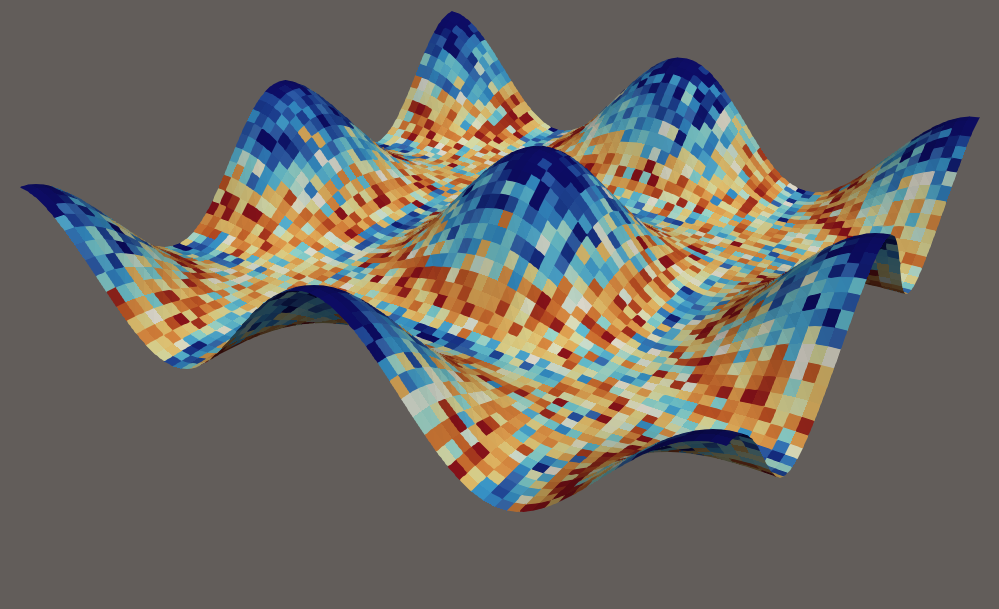}
 \includegraphics[width=0.4\textwidth]{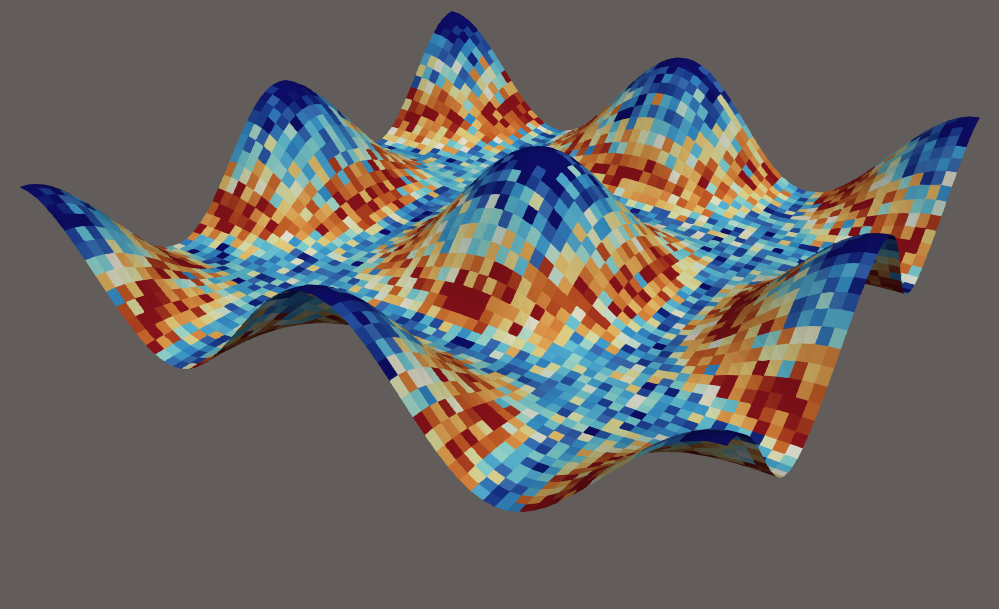}
    \caption{Time evolution of $N_{i,j}$. 
    The range in the image is restricted to $[10,40]$. The height of the peaks on the surface have been scaled by 0.3 so that $N_{i,j}$ is not obscured.
    }
    \label{fig:num_move_evolve}
\end{figure}

\appendix
\renewcommand{\thesection}{\appendixname~\Alph{section}}

\section{ Sinusoidal perturbation of a square}\label{AppendixA}

In order to avoid leaving the patch problem, we consider perturbed square with periodic identification: $ (x,y) \in [0,2\pi)^2 $.  The sinusoidal perturbation of a square is given by
\[
H(x,y) =a \sin(x) \sin(y), 
\]
where $a$ is some positive constant.

We can compute all the geometric quantities explicitly: First note that
\begin{align*}
    \nabla H(x,y) &= (a \cos x \sin y, a \sin x \cos x), \\
    G(x,y)
     &= \begin{pmatrix}
   1 + a^2 \cos^2 x \sin^2 y & a^2 \cos x \sin x \sin y \cos y\\[1ex]
   a^2 \cos x \sin x \sin y \cos y & 1 + a^2 \sin^2 x \cos^2 y
   \end{pmatrix}\\
    &= \begin{pmatrix}
   1+p^2 & pq \\[1ex]
   pq & 1+q^2
   \end{pmatrix},
\end{align*}
where 
$p = a \cos x \sin y$ and $q=a\sin x \cos y$.
We then have
\[
|G(x,y)| = 1+p^2+q^2 =: s(x,y),
\]
and
\[
G^{-1} = 
\begin{pmatrix}
  \dfrac{1+q^2}{s} & -\dfrac{pq}{s}\\[2ex]
   -\dfrac{pq}{s} & \dfrac{1+p^2}{s}
   \end{pmatrix}
   \qquad \mathrm{and} \qquad G^{-1/2} = \begin{pmatrix}
   1-c p^2 & -\,c pq\\[4pt]
-\,c pq & 1-c q^2
   \end{pmatrix},
\]
where 
\[
c = \frac{1}{s + \sqrt{s}}.
\]
We note that $G^{-1/2}$ is not unique; however, as long as $G^{-1/2}(G^{-1/2})^T = G^{-1}$, the noise term will have the correct covariance.

Finally, for particle simulations we need to evaluate
\[
b\begin{pmatrix}
      x \\ y 
   \end{pmatrix} = \frac{1}{\sqrt{| G |}} \nabla \cdummy \left(
   \sqrt{| G |} G^{- 1} \right) \begin{pmatrix}
      x \\ y 
   \end{pmatrix}
\]
in Equation (\ref{eq:BMonSurface}):
\begin{align}
b^x
 &= \frac{a^2\big(2+a^2(\cos^2x+\cos^2y)\big)\; \sin x \, \cos x \,\sin^2 y}
        {s^2}, \\[2ex]
b^y
 &= \frac{a^2\big(2+a^2(\cos^2x+\cos^2y)\big)\; \sin y \, \cos y \,\sin^2 x}
        {s^2} .
\end{align}

\section{Four peak surface}\label{AppendixB}

The second surface we consider features four peaks on the domain
$(x,y) \in [0,2\pi)^2 $.  The surface in this case is given by
\[
H(x,y) = a \sin^2(x) \sin^2(y)
\]
where $a$ is some positive constant.

All the geometric quantities can be compute explicitly as:
\begin{align*}
    \nabla H(x,y) &= \begin{pmatrix}
        2 a \cos x \, \sin x \,\sin^2 y  \\ 2a \sin^2 x \,\cos y \,\sin y
    \end{pmatrix}, \\
    G(x,y)
     &= \begin{pmatrix}
   1 + 4 a^2 \cos^2 x \,\sin^2 x \,\sin^4 y & 4 a^2 \cos x\, \sin^3 x\, \sin^3 y \,\cos y\\[1ex]
   4 a^2 \cos x \,\sin^3 x \,\sin^3 y\, \cos y  & 1 + 4 a^2 \sin^4 x \,\cos^2y \,\sin^2y
   \end{pmatrix}.\\
    &= \begin{pmatrix}
   1+p^2 & pq \\[1ex]
   pq & 1+q^2
   \end{pmatrix},
   \end{align*}
where $p =  2 a \cos x  \sin x \sin^2 y$ and $q = 2a \sin^2 x \cos y \sin y$.
We then have
\[
|G(x,y)| = 1+p^2+q^2 =: s(x,y),
\]
and
\[
G^{-1} = 
\begin{pmatrix}
  \dfrac{1+q^2}{s} & -\dfrac{pq}{s}\\[2ex]
   -\dfrac{pq}{s} & \dfrac{1+p^2}{s}
   \end{pmatrix}
   \qquad \mathrm{and} \qquad G^{-1/2} = \begin{pmatrix}
   1-c p^2 & -\,c pq\\[4pt]
-\,c pq & 1-c q^2
   \end{pmatrix},
\]
where 
\[
c = \frac{1}{s + \sqrt{s}}.
\]
We again note that $G^{-1/2}$ is not unique; however, as long as $G^{-1/2}(G^{-1/2})^T = G^{-1}$, the noise term will have the correct covariance.

Finally, for particle simulations we need to evaluate
\[
b\begin{pmatrix}
      x \\ y 
   \end{pmatrix} = \frac{1}{\sqrt{| G |}} \nabla \cdummy \left(
   \sqrt{| G |} G^{- 1} \right) \begin{pmatrix}
      x \\ y 
   \end{pmatrix}
\]
in Equation~(\ref{eq:BMonSurface}).
We first compute the derivatives of $s$ with respect to $x$ and $y$ to obtain
\begin{eqnarray*}
    s_x =& 8 a^2 \sin x \,\sin y \,\cos x \left( 2 \cos^2y\, \sin^2x \,\sin y +
    \sin^3 y\, (\cos^2 x -\sin^2x) \right) \\
    s_y =& 8 a^2 \sin x \,\sin y \,\cos y \left( 2 \cos^2x \,\sin^2y\,\sin x +
    \sin^3 x\, (\cos^2 y -\sin^2y) \right).
\end{eqnarray*}
We then have
\begin{eqnarray*}
    b^x =& 2 a^2 \left( \frac{2 \cos x \,\sin^3 x\, \sin^2y}{s} +
    \frac{[\cos x \, \cos y \, \sin^3 x \, \sin^3y \,] s_y}{s^2}  \right)
    -\frac{[4a^2 \cos^2y \,\sin^3x\,\sin^3y\,+ 1] s_x}{2 s^2}\\
    b^y =&  2 a^2 \left( \frac{2 \cos y \,\sin^3 y\, \sin^2x}{s} +
    \frac{[\cos x \, \cos y \, \sin^3 x \, \sin^3y \,] s_x}{s^2}  \right)
    -\frac{[4a^2 \cos^2x \,\sin^3x\,\sin^3y\,+ 1] s_y}{2 s^2} .
\end{eqnarray*}

\section{Additional proofs}\label{app:aux-lemma}

\begin{proof}[Proof of Lemma \ref{lem:martingale-filtration}]
  By a monotone class argument, it suffices to show that
  \[ \mathbb{E} [(M_t - M_s) F_s \Phi (B)] = 0 \]
  for all $0 \leq s \leq t$ and all bounded and
  $\mathcal{F}_s$-measurable $F_s$ and all bounded measurable $\Phi : C
  (\mathbb{R}_+, \mathbb{R}^d) \rightarrow \mathbb{R}$. By the martingale
  representation theorem for $B$, there exists a square-integrable adapted $H$ with
  \[
    \Phi (B) =\mathbb{E} [\Phi (B)] +
  \int_0^{\infty} H_r \cdot \mathd B_r,
  \]
  hence
  \begin{align*}
    \mathbb{E} [(M_t - M_s) F_s \Phi (B)] & = \mathbb{E} \left[ (M_t - M_s)
    F_s \left( \mathbb{E} [\Phi (B)] + \int_0^s H_r \cdot \mathd B_r \right)
    \right] \\
    &\qquad +\mathbb{E} \left[ (M_t - M_s) F_s \int_s^{\infty} H_r \cdot
    \mathd B_r \right]\\
    & = 0,
  \end{align*}
  where we used the tower property to insert $\mathbb{E} [\cdot
  \mid\mathcal{F}_s]$ for both terms, and for the second term we also used that the product of $M$ with the stochastic integral is a martingale, because
  \begin{align*}
    (M_{\cdot \wedge t} - M_s) \int_s^{\cdot} H_r \cdot \mathd B_r & =
    (M_{\cdot \wedge t} - M_s) \int_s^{\cdot} H_r \cdot \mathd B_r -
    \int_s^{\cdot \wedge t} H_r \cdot \mathd \langle M, B \rangle_r\\
    & = (M_{\cdot \wedge t} - M_s) \int_s^{\cdot} H_r \cdot \mathd
    B_r - \left\langle (M_{\cdot \wedge t} - M_s), \int_s^{\cdot} H_r
    \cdot \mathd B_r \right\rangle.
  \end{align*}
\end{proof}

\section*{Acknowledgment}
The authors are grateful to Ann Almgren for her help with numerical experiments and Mirjana Djori\' c for informative discussions about the geometric setting.
ADj and NP gratefully acknowledge funding   by Deutsche Forschungsgemeinschaft (DFG)  through grant CRC 1114 ``Scaling Cascades in Complex Systems", Project Number 235221301, Project C10 ``Numerical
Analysis for nonlinear SPDE models of particle systems". This material is based upon work supported by the National Science Foundation under Grant No. DMS-2424139, while ADj and NP were in residence at the Simons Laufer Mathematical Sciences Institute in Berkeley, California, during the Fall 2025 semester. Further support of ADj is by Daimler and Benz Foundation as part of the scholarship program for junior professors and postdoctoral researchers.
The work of JB was supported by the U.S. Department of Energy, Office of Science, Office of Advanced Scientific Computing Research, Applied Mathematics Program under contract No. DE-AC02-05CH11231. 

\bibliographystyle{plain}   
\bibliography{ref} 
\end{document}